\newfont{\footsc}{cmcsc10 at 8truept}
\newfont{\footbf}{cmbx10 at 8truept}
\newfont{\footrm}{cmr10 at 10truept}
\newtheorem{theorem}{Theorem}
\newtheorem{corollary}[theorem]{Corollary}
\newtheorem{lemma}[theorem]{Lemma}
\newtheorem{problem}[theorem]{Problem}
\newtheorem{proposition}[theorem]{Proposition}
\newenvironment{proof}[1][Proof]{\noindent{\textbf {#1}  }}  {\hfill$\Box$\bigskip}
\begin{document}

\title{More eigenvalue problems of Nordhaus-Gaddum type}
\author{Vladimir Nikiforov\thanks{Department of Mathematical Sciences, University of
Memphis, Memphis TN 38152, USA; \textit{email: vnikiforv@memphis.edu}} \ and
Xiying Yuan\thanks{Corresponding author. Department of Mathematics, Shanghai
University, Shanghai 200444, China; \textit{email: xiyingyuan2007@hotmail.com
}} \thanks{Research supported by National Science Foundation of China grant
No. 11101263, and by a grant of \textquotedblleft The First-class Discipline
of Universities in Shanghai\textquotedblright.} }
\maketitle

\begin{abstract}
Let $G$ be a graph of order $n$ and let $\mu_{1}\left(  G\right)  \geq
\cdots\geq\mu_{n}\left(  G\right)  $ be the eigenvalues of its adjacency
matrix. This note studies eigenvalue problems of Nordhaus-Gaddum type. Let
$\overline{G}$ be the complement of a graph $G.$ It is shown that if $s\geq2$
and $n\geq15\left(  s-1\right)  ,$ then
\[
\left\vert \mu_{s}\left(  G\right)  \right\vert +|\mu_{s}(\overline{G})|\,\leq
n/\sqrt{2\left(  s-1\right)  }-1.
\]

Also if $s\geq1$ and $n\geq4^{s},$ then
\[
\left\vert \mu_{n-s+1}\left(  G\right)  \right\vert +|\mu_{n-s+1}(\overline
{G})|\,\leq n/\sqrt{2s}+1.
\]
If $s=2^{k}+1$ for some integer $k$, these bounds are asymptotically tight.
These results settle infinitely many cases of a general open problem.\medskip

\textbf{AMS classification: }\textit{15A42, 05C50}

\textbf{Keywords:}\textit{ graph eigenvalues, complementary graph, maximum
eigenvalue, minimum eigenvalue, Nordhaus-Gaddum problems.}

\end{abstract}

\section{Introduction}

Let $\overline{G}$ denote the complement of a graph $G.$ A
\emph{Nordhaus-Gaddum problem} is of the type:\medskip

\emph{Given a graph parameter }$p\left(  G\right)  ,$\emph{ determine}
\[
\max\left\{  p\left(  G\right)  +p(\overline{G}):v\left(  G\right)
=n\right\}  \text{ \ \ \ or \ \ \ }\min\left\{  p\left(  G\right)
+p(\overline{G}):v\left(  G\right)  =n\right\}  .
\]
Since first introduced by Nordhaus and Gaddum in \cite{NoGa56}, such problems
have been studied for a huge variety of graph parameters; see \cite{AoHa13}
for a recent comprehensive survey. The Nordhaus-Gaddum problems attract
attention because they help to get deeper insights in extremal graph
questions. Also, these problems are the closest analog to Ramsey problems for
non-discrete parameters $p\left(  G\right)  .$

In this note we shall be interested in the case when $p\left(  G\right)  $ is
a spectral graph parameter; thus, given a graph $G$ of order $n,$ let us index
the eigenvalues of the adjacency matrix of $G$ as $\mu_{1}\left(  G\right)
\geq\cdots\geq\mu_{n}\left(  G\right)  $ and set $\mu\left(  G\right)
=\mu_{1}\left(  G\right)  .$

The first known spectral Nordhaus-Gaddum results belong to Nosal \cite{Nos70},
and to Amin and Hakimi \cite{AmHa72}, who showed that for every graph $G$ of
order $n,$
\begin{equation}
n-1\leq\mu\left(  G\right)  +\mu\left(  \overline{G}\right)  <\sqrt{2}\left(
n-1\right)  . \label{Nosin}%
\end{equation}

The lower bound in (\ref{Nosin}) is best possible and is attained if and only
if $G$ is a regular graph; however the upper bound can be improved
significantly. A minor improvement has been shown in \cite{Nik07}, but an
essentially best possible bound has been found only recently, by Csikvari
\cite{Csi09} and Terpai \cite{Ter11} who showed that $\mu\left(  G\right)
+\mu\left(  \overline{G}\right)  \leq4n/3-1.$

A similar problem for other eigenvalues has been proposed in \cite{Nik07}%
:\medskip

\emph{Given }$s$\emph{ and }$n$\emph{, find or estimate the functions}%
\[
f_{s}\left(  n\right)  =\max_{v\left(  G\right)  =n}\left\vert \mu_{s}\left(
G\right)  \right\vert +\left\vert \mu_{s}\left(  \overline{G}\right)
\right\vert \text{ \ \ and \ \ \ }f_{n-s}\left(  n\right)  =\max_{v\left(
G\right)  =n}\left\vert \mu_{n-s+1}\left(  G\right)  \right\vert +\left\vert
\mu_{n-s+1}\left(  \overline{G}\right)  \right\vert .
\]

Several bounds have been proved in \cite{Nik07}; among these is a tight bound
for $f_{2}\left(  n\right)  $:%
\[
\frac{n}{\sqrt{2}}-3<f_{2}\left(  n\right)  <\frac{n}{\sqrt{2}}.
\]

The problem of finding $f_{s}\left(  n\right)  $ for $s\neq2$ has remained
largely open for some time, and has been recently reiterated in \cite{AoHa13}.
In this paper we make further progress along these lines and settle
asymptotically an infinite number of cases. In addition we extend the study to
even more general spectral parameters$.$

Our first statement is about a function similar to $f_{s}\left(  n\right)  $.

\begin{theorem}
\label{thl}If $s\geq2,$ $n\geq3s-2,$ and $G$ is a graph of order $n,$ then
\begin{equation}
\sum_{i=2}^{s}\left(  \mu_{i}^{2}(G)+\mu_{i}^{2}(\overline{G})\right)
<\frac{n^{2}}{4}. \label{in1}%
\end{equation}

\end{theorem}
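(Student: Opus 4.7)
I would start from the trace identity $\sum_{i=1}^{n}\mu_{i}^{2}(G)=\operatorname{tr}A(G)^{2}=2e(G)$. Summing over $G$ and $\overline{G}$ and using $e(G)+e(\overline{G})=\binom{n}{2}$, we obtain
\[
\sum_{i=1}^{n}\bigl(\mu_{i}^{2}(G)+\mu_{i}^{2}(\overline{G})\bigr)=n(n-1),
\]
so (\ref{in1}) is equivalent to showing
\[
\mu_{1}^{2}(G)+\mu_{1}^{2}(\overline{G})+\sum_{i=s+1}^{n}\bigl(\mu_{i}^{2}(G)+\mu_{i}^{2}(\overline{G})\bigr)>\frac{3n^{2}}{4}-n.
\]
For the first two terms, Nosal's inequality (\ref{Nosin}) gives $\mu_{1}(G)+\mu_{1}(\overline{G})\geq n-1$, and QM--AM yields $\mu_{1}^{2}(G)+\mu_{1}^{2}(\overline{G})\geq(n-1)^{2}/2$; this covers roughly two-thirds of what is needed and leaves about $n^{2}/4$ to be extracted from the tail.

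For the tail, the key input is Weyl's inequality applied to $A(G)+A(\overline{G})=J-I$. Since $J-I$ has spectrum $\{n-1,-1,\ldots,-1\}$, Weyl gives $\mu_{i}(G)+\mu_{j}(\overline{G})\leq-1$ whenever $i+j\geq n+2$. Pairing each $i\in\{s+1,\ldots,n\}$ in the tail of $G$ with $j=n-i+2\in\{2,\ldots,n-s+1\}$ of $\overline{G}$, and using $a^{2}+b^{2}\geq\tfrac{1}{2}(a+b)^{2}$, we obtain $\mu_{i}^{2}(G)+\mu_{n-i+2}^{2}(\overline{G})\geq1/2$ per pair. The hypothesis $n\geq 3s-2$, equivalent to $n-s\geq 2(s-1)$, is what ensures that this pairing and its mirror (obtained by swapping the roles of $G$ and $\overline{G}$) can be applied simultaneously without overlap of the index ranges.

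The cleanest case is when $G$ is regular of degree $d$: then $\mathbf{1}$ is a common eigenvector of $A(G)$ and $A(\overline{G})$, and restricting to $\mathbf{1}^{\perp}$ yields the exact identity $\mu_{i}(\overline{G})=-1-\mu_{n-i+2}(G)$ for $i=2,\ldots,n$. Substituting in the left-hand side of (\ref{in1}) and expanding reduces the claim to
\[
\sum_{i=2}^{s}\mu_{i}^{2}(G)+\sum_{j=n-s+2}^{n}\mu_{j}^{2}(G)+2\sum_{j=n-s+2}^{n}\mu_{j}(G)+(s-1)<\frac{n^{2}}{4}.
\]
When $n\geq 2s-1$, the two blocks of indices are disjoint subsets of $\{2,\ldots,n\}$, so the quadratic sums together are bounded by $2e(G)-d^{2}=d(n-d)\leq n^{2}/4$, and the linear tail $\sum_{j=n-s+2}^{n}\mu_{j}(G)$ is strictly negative by $\sum_{i=1}^{n}\mu_{i}(G)=0$ together with the ordering of the eigenvalues; this negativity is what closes the gap. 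For non-regular $G$, Weyl and Nosal are both strict, and one would hope to absorb the non-regular case into the regular one via a perturbation argument using this strict slack.

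The principal obstacle is quantifying the slack in the non-regular case so that the estimate always falls below $n^{2}/4$: the pairwise Weyl bound alone yields only $O(n)$ of tail mass, so the decisive $\Omega(n^{2})$ contribution must come from combining the pairwise bounds with the trace identity in a tight way. The precise constant $n\geq 3s-2$ should emerge as the threshold at which this combination closes, both by ensuring the disjointness of the pairings in the tail and by leaving enough room for the linear term in the regular identity above to dominate $(s-1)$.
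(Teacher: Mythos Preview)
Your proposal identifies the right ingredients---the trace identity, Nosal's bound $\mu_{1}(G)+\mu_{1}(\overline{G})\geq n-1$, and Weyl's inequality $\mu_{i}(G)+\mu_{n-i+2}(\overline{G})\leq-1$---but it does not close. You correctly observe that the naive use of Weyl (``$a^{2}+b^{2}\geq\tfrac12(a+b)^{2}\geq\tfrac12$'') contributes only $O(n)$ to the tail, whereas $\Omega(n^{2})$ is needed; you then handle the regular case by an explicit identity, and for the general case you only express a hope that ``a perturbation argument'' will work. That hope is precisely the missing step, and it is not routine: the slack in Nosal and in Weyl for non-regular graphs is not easily quantified in a way that dominates the $n^{2}/4$ gap.

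The idea you are missing is to use Weyl \emph{multiplicatively} rather than additively, via a sign split. For each $2\leq i\leq s$, one shows
\[
\mu_{i}^{2}(G)\;\leq\;\mu_{s+i-1}^{2}(G)+\mu_{n-i+2}^{2}(\overline{G}),
\]
because if $\mu_{i}(G)\geq 0$ then Weyl gives $\mu_{i}^{2}<(\mu_{i}+1)^{2}\leq\mu_{n-i+2}^{2}(\overline{G})$, while if $\mu_{i}(G)<0$ then monotonicity gives $\mu_{i}^{2}\leq\mu_{s+i-1}^{2}$. Summing over $i$ and adding the symmetric inequality for $\overline{G}$ yields
\[
2\sum_{i=2}^{s}\bigl(\mu_{i}^{2}(G)+\mu_{i}^{2}(\overline{G})\bigr)
\;\leq\;
\sum_{i=2}^{2s-1}\mu_{i}^{2}(G)+\sum_{i=n-s+2}^{n}\mu_{i}^{2}(G)
+\sum_{i=2}^{2s-1}\mu_{i}^{2}(\overline{G})+\sum_{i=n-s+2}^{n}\mu_{i}^{2}(\overline{G}),
\]
and the hypothesis $n\geq 3s-2$ makes $\{2,\ldots,2s-1\}$ and $\{n-s+2,\ldots,n\}$ disjoint, so the right side is at most $\sum_{i=2}^{n}(\mu_{i}^{2}+\overline{\mu}_{i}^{2})=n(n-1)-\mu_{1}^{2}-\overline{\mu}_{1}^{2}\leq n(n-1)-(n-1)^{2}/2$. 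Dividing by $2$ gives exactly $(n^{2}-1)/4<n^{2}/4$. This argument is uniform---no regular/non-regular split is needed---and the threshold $3s-2$ appears for the disjointness reason you anticipated, but applied to $\{2,\ldots,2s-1\}$ rather than to the tail pairing you wrote down.
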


Applying the AM-QM inequality to (\ref{in1}), we obtain another
Nordhaus-Gaddum result.

\begin{corollary}
\label{cor1}If $s\geq2,$ $n\geq3s-2,$ and $G$ is a graph of order $n,$ then%
\[
\sum_{i=2}^{s}\left(  \left\vert \mu_{i}(G)\right\vert +|\mu_{i}(\overline
{G})|\right)  <n\sqrt{\left(  s-1\right)  /2}.
\]

\end{corollary}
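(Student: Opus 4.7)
The plan is to deduce this directly from Theorem~\ref{thl} via the AM--QM inequality, exactly as the sentence preceding the corollary suggests.

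First I would observe that the sum on the left has $2(s-1)$ nonnegative terms, namely $|\mu_i(G)|$ and $|\mu_i(\overline{G})|$ for $i=2,\ldots,s$. Recall that for any nonnegative reals $a_1,\ldots,a_m$, the AM--QM (equivalently Cauchy--Schwarz) inequality gives
\[
\sum_{k=1}^{m} a_k \leq \sqrt{\,m \sum_{k=1}^{m} a_k^2\,}.
\]
Applying this with $m=2(s-1)$ to the collection $\{|\mu_i(G)|, |\mu_i(\overline{G})|\}_{i=2}^{s}$ yields
\[
\sum_{i=2}^{s}\bigl(|\mu_i(G)| + |\mu_i(\overline{G})|\bigr) \leq \sqrt{\,2(s-1)\sum_{i=2}^{s}\bigl(\mu_i^2(G) + \mu_i^2(\overline{G})\bigr)\,}.
\]

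Finally I would substitute the strict upper bound from Theorem~\ref{thl} into the radicand, obtaining
\[
\sum_{i=2}^{s}\bigl(|\mu_i(G)| + |\mu_i(\overline{G})|\bigr) < \sqrt{\,2(s-1)\cdot n^2/4\,} = n\sqrt{(s-1)/2},
\]
which is precisely the claim. No real obstacle is expected: all the content is packed into Theorem~\ref{thl}, and the corollary is a one-line application of a classical inequality to its conclusion, with the strictness transferred from the strict bound in~\eqref{in1}.
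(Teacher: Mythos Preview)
Your proof is correct and is exactly the approach the paper takes: the corollary is stated immediately after Theorem~\ref{thl} with the remark that it follows by applying the AM--QM inequality to~\eqref{in1}, and no further argument is given.
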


However, we were not able to deduce the following natural statement directly
from Theorem \ref{thl}, so we shall provide a separate proof for it.

\begin{theorem}
\label{thl1}If $s\geq2,$ $n\geq3s-2,$ and $G$ is a graph of order $n,$ then
\begin{equation}
\mu_{s}^{2}(G)+\mu_{s}^{2}(\overline{G})<\frac{n^{2}}{4\left(  s-1\right)  }.
\label{in4}%
\end{equation}

\end{theorem}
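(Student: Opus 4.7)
My approach is a case analysis on the signs of $\mu_s(G)$ and $\mu_s(\overline{G})$, chaining Theorem~\ref{thl} with trace identities and a Weyl bound. The point is that the averaging $\mu_i^2(G)\geq\mu_s^2(G)$ for $i=2,\dots,s$ only works when $\mu_s(G)\geq 0$ (so that $\mu_i\geq\mu_s\geq 0$ forces $\mu_i^2\geq\mu_s^2$); when $\mu_s(G)<0$, the monotonicity instead shifts to the tail, $\mu_s^2(G)\leq\mu_{s+1}^2(G)\leq\cdots\leq\mu_n^2(G)$. This is precisely the obstruction to deducing Theorem~\ref{thl1} directly from Theorem~\ref{thl}.

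If $\mu_s(G)\geq 0$ and $\mu_s(\overline{G})\geq 0$, averaging applies on both sides and Theorem~\ref{thl} yields
\[
(s-1)\bigl(\mu_s^2(G)+\mu_s^2(\overline{G})\bigr)\;\leq\;\sum_{i=2}^s\bigl(\mu_i^2(G)+\mu_i^2(\overline{G})\bigr)\;<\;\frac{n^2}{4},
\]
finishing this case. If instead $\mu_s(G)\leq 0$ and $\mu_s(\overline{G})\leq 0$, the tail inequality on each side gives $(n-s+1)\mu_s^2(G)\leq\sum_{i=s}^n\mu_i^2(G)\leq 2e(G)-\mu_1^2(G)$ and likewise for $\overline{G}$. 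Adding, and using $2e(G)+2e(\overline{G})=n(n-1)$ together with Nosal's inequality $\mu_1(G)+\mu_1(\overline{G})\geq n-1$ (which by QM--AM gives $\mu_1^2(G)+\mu_1^2(\overline{G})\geq(n-1)^2/2$), produces $(n-s+1)\bigl(\mu_s^2(G)+\mu_s^2(\overline{G})\bigr)\leq(n^2-1)/2$; the hypothesis $n\geq 3s-2$ makes $n-s+1\geq 2s-1$, and a short arithmetic check then gives $(n^2-1)/(2(n-s+1))<n^2/(4(s-1))$ strictly.

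The main obstacle is the mixed case $\mu_s(G)\geq 0>\mu_s(\overline{G})$ (and, symmetrically, $\mu_s(\overline{G})\geq 0>\mu_s(G)$), where the top-end averaging on $G$ and the tail bound on $\overline{G}$ pull from opposite ends of the spectrum and do not combine directly. My plan here is to bridge them via Weyl's inequality applied to $A(G)+A(\overline{G})=J-I$: since $\mu_2(J-I)=-1$, one has $\mu_s(G)+\mu_{n-s+2}(\overline{G})\leq -1$, hence $|\mu_i(\overline{G})|\geq 1+\mu_s(G)$ for every $i\geq n-s+2$. This supplies $s-1$ bottom eigenvalues of $\overline{G}$ whose squares simultaneously dominate $\mu_s^2(\overline{G})$ (by the tail monotonicity for $\overline{G}$) and $\mu_s^2(G)$ (using $(1+x)^2\geq x^2$ for $x\geq 0$); substituting into $\sum_{i=s}^n\mu_i^2(\overline{G})\leq 2e(\overline{G})-\mu_1^2(\overline{G})$ and invoking Nosal once more to control $\mu_1^2(\overline{G})$ should close the last case. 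The delicate bookkeeping in this last step—trading off the two lower bounds on the bottom tail against each other so that neither $\mu_s^2(G)$ nor $\mu_s^2(\overline{G})$ is undercounted—is where I expect the real work.
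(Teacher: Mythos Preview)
Your Cases 1 and 2 are correct and essentially match the paper's argument (the paper handles Case~2 via Lemma~\ref{a2} on each graph separately, but your additive version works equally well).

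In the mixed case, however, your plan has a genuine gap. You observe that each of the $s-1$ bottom eigenvalues $\overline{\mu}_{n-s+2},\ldots,\overline{\mu}_n$ satisfies both $\overline{\mu}_j^2\geq\overline{\mu}_s^2$ and $\overline{\mu}_j^2>\mu_s^2$. But each $\overline{\mu}_j^2$ can only be counted once in the trace sum, so from these $s-1$ indices you get at best $\sum_{j=n-s+2}^n\overline{\mu}_j^2\geq(s-1)\max(\mu_s^2,\overline{\mu}_s^2)\geq\tfrac{s-1}{2}(\mu_s^2+\overline{\mu}_s^2)$, which is a factor of $2$ short of what you need. Invoking Nosal does not help here: Nosal controls $\mu_1(G)+\mu_1(\overline{G})$, but you are working inside $\overline{G}$ alone, and the relevant bound $\sum_{i\in X}\overline{\mu}_i^2\leq n^2/4$ for $X\subset\{2,\ldots,n\}$ comes from $\overline{\mu}_1\geq 2e(\overline{G})/n$ (Lemma~\ref{a1}), not from Nosal.

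The fix is simple and is exactly what the paper does: use \emph{two disjoint} blocks of $s-1$ indices in $\overline{G}$. The bottom block $\{n-s+2,\ldots,n\}$ supplies $(s-1)\mu_s^2(G)$ via Weyl, as you noted. A second block, say $\{s+1,\ldots,2s-1\}$, supplies $(s-1)\overline{\mu}_s^2$ by tail monotonicity (since $\overline{\mu}_i\leq\overline{\mu}_s<0$ for $i\geq s$). The hypothesis $n\geq 3s-2$ is precisely what makes these two blocks disjoint ($2s-1<n-s+2$). Then Lemma~\ref{a1} applied to their union $X$ gives
\[
(s-1)\bigl(\mu_s^2(G)+\mu_s^2(\overline{G})\bigr)\;<\;\sum_{i\in X}\overline{\mu}_i^2\;\leq\;\frac{n^2}{4},
\]
with strict inequality coming from the Weyl step $(1+\mu_s)^2>\mu_s^2$. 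So your ``delicate bookkeeping'' dissolves once you stop asking the bottom $s-1$ eigenvalues to do double duty.
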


Applying the AM-QM inequality to the left side of (\ref{in4}), one immediately
sees that
\[
\left\vert \mu_{s}\left(  G\right)  \right\vert +|\mu_{s}(\overline
{G})|\,<\frac{n}{\sqrt{2\left(  s-1\right)  }},
\]
which is a new bound on $f_{s}\left(  n\right)  .$ However, we can do better,
using a trick that was pioneered by the first author in \cite{Nik06a}, and has
been applied on numerous occasions since then. We thus get the following bound.

\begin{theorem}
\label{cor2}If $s\geq2,$ $n\geq15\left(  s-1\right)  ,$ and $G$ is a graph of
order $n,$ then
\[
\left\vert \mu_{s}\left(  G\right)  \right\vert +|\mu_{s}(\overline{G}%
)|\,\leq\frac{n}{\sqrt{2\left(  s-1\right)  }}-1.
\]

\end{theorem}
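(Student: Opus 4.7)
The plan is to apply Theorem~\ref{thl1} not to $G$ itself but to its lexicographic blow-up, the trick pioneered in~\cite{Nik06a}. For each integer $t \geq 1$, let $G_t := G[\overline{K_t}]$ denote the graph on $tn$ vertices obtained by replacing every vertex of $G$ by an independent set of size $t$; then $A(G_t) = A(G) \otimes J_t$, and a tensor-product computation shows that the spectrum of $G_t$ consists of $\{t\mu_i(G)\}_{i=1}^n$ together with $(t-1)n$ copies of $0$. Similarly $A(\overline{G_t}) = A(\overline{G}) \otimes J_t + I_n \otimes (J_t - I_t)$, whose spectrum consists of $\{t\mu_i(\overline{G}) + (t-1)\}_{i=1}^n$ together with $(t-1)n$ copies of $-1$.

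In the principal case $\mu_s(G), \mu_s(\overline{G}) \geq 0$, these formulas give $\mu_s(G_t) = t\mu_s(G)$ and $\mu_s(\overline{G_t}) = t(\mu_s(\overline{G})+1) - 1$ for every $t \geq 1$. Since $tn \geq 3s-2$, Theorem~\ref{thl1} applied to $G_t$ yields $t^2 \mu_s^2(G) + (t(\mu_s(\overline{G})+1) - 1)^2 < t^2 n^2/(4(s-1))$. Dividing by $t^2$ and letting $t \to \infty$ produces the key sharpened inequality
\[ \mu_s^2(G) + \bigl(\mu_s(\overline{G}) + 1\bigr)^2 \leq \frac{n^2}{4(s-1)}, \]
to which the AM--QM inequality applies to give $\mu_s(G) + \mu_s(\overline{G}) + 1 \leq n/\sqrt{2(s-1)}$, which rearranges to the desired bound.

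For the off-sign subcases I would combine (i) Weyl's inequality applied to $A(G) + A(\overline{G}) = J - I$, which, since $\mu_{2s-1}(J-I) = -1$ for $s \geq 2$ and $n \geq 2s-1$, gives $\mu_s(G) + \mu_s(\overline{G}) \geq -1$; (ii) the symmetric companion inequality $\mu_s^2(\overline{G}) + (\mu_s(G)+1)^2 \leq n^2/(4(s-1))$ obtained by running the blow-up argument on $\overline{G}$; and (iii) Theorem~\ref{thl1} itself. When $\mu_s(G), \mu_s(\overline{G}) \leq 0$, Weyl immediately gives $|\mu_s(G)| + |\mu_s(\overline{G})| \leq 1$, which is far below the target under our hypothesis; when exactly one of the two lies in $(-1, 0)$, the trivial estimate $|\mu_s(\overline{G})| < 1$ combined with $|\mu_s(G)| \leq n/(2\sqrt{s-1})$ (Theorem~\ref{thl1}) keeps the sum below $n/\sqrt{2(s-1)} - 1$ whenever $n \geq 15(s-1)$.

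The main obstacle is the last mixed-sign subcase: $\mu_s(G) \geq 0$ and $\mu_s(\overline{G}) \leq -1$ with $|\mu_s(\overline{G})|$ appreciable, since neither blow-up directly constrains $|\mu_s(\overline{G})|$. Here the finishing step would use the trace identity $\sum_i \mu_i(\overline{G}) = 0$ together with $\mu_1(\overline{G}) \leq n-1$: because $\mu_s(\overline{G}) \leq -1$ forces $\mu_{s+1}(\overline{G}), \ldots, \mu_n(\overline{G}) \leq \mu_s(\overline{G})$ while the $s-1$ preceding eigenvalues are each at most $n-1$, one obtains $|\mu_s(\overline{G})| \leq (s-1)(n-1)/(n-s+1)$. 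Combining this a~priori bound with the symmetric blow-up estimate $\mu_s(G) \leq n/(2\sqrt{s-1}) - 1$ closes the estimate exactly when $n \geq 15(s-1)$, pinning down the constant appearing in the theorem.
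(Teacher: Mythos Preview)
Your blow-up argument in the principal case $\mu_s(G),\mu_s(\overline{G})\ge 0$ is correct and is essentially the paper's own trick, just applied to the squared inequality (\ref{in4}) before rather than after AM--QM. Your treatment of the case $\mu_s(G),\mu_s(\overline{G})\le 0$ via $\mu_s(G)+\mu_s(\overline{G})\ge\mu_{2s-1}(J-I)=-1$ is also fine, and the subcase where the negative eigenvalue lies in $(-1,0)$ is handled adequately.

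The genuine gap is in the last subcase, where $\mu_s(G)\ge 0$ and $\mu_s(\overline{G})\le -1$. Your trace-of-$A$ bound
\[
|\mu_s(\overline{G})|\le \frac{(s-1)(n-1)}{\,n-s+1\,}
\]
is far too weak: at $n=15(s-1)$ it gives $|\mu_s(\overline{G})|\le (15s-16)/14\approx s$, whereas your other term satisfies $\mu_s(G)\le n/(2\sqrt{s-1})-1\approx 7.5\sqrt{s-1}$, so the sum is of order $s$, not of order $\sqrt{s}$ as required. For instance, at $s=100$, $n=1485$ your two bounds sum to roughly $106+73.6\approx 180$, while the target $n/\sqrt{2(s-1)}-1\approx 104.5$. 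So the claim that this ``closes the estimate exactly when $n\ge 15(s-1)$'' is false.

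What is needed here is the trace-of-$A^2$ bound (the paper's Lemma~\ref{a2}): since $\mu_s(\overline{G})\le 0$ implies $\overline{\mu}_n\le\cdots\le\overline{\mu}_s\le 0$, one has $(n-s+1)\overline{\mu}_s^{\,2}\le \sum_{i\ge s}\overline{\mu}_i^{\,2}\le n^2/4$, giving
\[
|\mu_s(\overline{G})|\le \frac{n}{2\sqrt{\,n-s+1\,}}.
\]
Combining this with $|\mu_s(G)|\le n/(2\sqrt{s-1})$ (or your sharper blow-up version), the required inequality
\[
\frac{n}{2\sqrt{s-1}}+\frac{n}{2\sqrt{\,n-s+1\,}}\ \le\ \frac{n}{\sqrt{2(s-1)}}-1
\]
does hold precisely for $n\ge 15(s-1)$; it is this computation, not your first-moment trace bound, that determines the constant $15$.
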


It turns out that the last inequality is asymptotically tight, at least for
some values of $s,$ as shown in Theorem \ref{thlb} below.

Finding $f_{n-s}\left(  n\right)  $ turns to be slightly different. We begin
with an analog of Theorem \ref{thl}.

\begin{theorem}
\label{ths}If $s\geq1,$ $n>2s,$ and $G$ is a graph of order $n,$ then
\begin{equation}%
{\displaystyle\sum\limits_{i=1}^{s}}
\left(  \mu_{n-i+1}^{2}\left(  G\right)  +\mu_{n-i+1}^{2}(\overline
{G})\right)  \leq\left(  \frac{n}{2}+s\right)  ^{2}. \label{in3}%
\end{equation}

\end{theorem}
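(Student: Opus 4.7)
Plan. The proof relies on the matrix identity $A(G)+A(\overline{G})=J-I$, which rigidly constrains the joint spectra of $A(G)$ and $A(\overline{G})$.

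I would begin with Weyl's inequality: applying the bound $\mu_{i+j-n}(M_1+M_2)\ge \mu_i(M_1)+\mu_j(M_2)$ with $M_1=A(G)$, $M_2=A(\overline{G})$, and $i=j=n-k+1$, and noting that $\mu_{n-2k+2}(J-I)=-1$ whenever $n-2k+2\ge 2$ (which holds for $k\le s$ by the hypothesis $n>2s$), I would obtain the per-index bound
\[
\mu_{n-k+1}(G)+\mu_{n-k+1}(\overline{G})\le -1, \qquad k=1,\ldots,s.
\]

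To pass from these per-index inequalities to a sum-of-squares bound, I would work inside the subspace $W=\mathrm{span}(\mathbf{1},u_1,\ldots,u_s,v_1,\ldots,v_s)$ of dimension at most $2s+1$, where $u_i$ and $v_j$ are orthonormal bottom-$s$ eigenvectors of $A(G)$ and $A(\overline{G})$, respectively. Let $P$ be the orthogonal projection onto $W$; since $\mathbf{1}\in W$, we have $\|P\mathbf{1}\|^2=n$, so
\[
\mathrm{tr}\bigl(P(J-I)P\bigr)=n-\dim W\ge n-(2s+1),
\]
and as an operator on $W$, $P(J-I)P$ has a single positive eigenvalue $n-1$ (arising from the direction of $\mathbf{1}$) with all remaining eigenvalues equal to $-1$.

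Then I would use the decomposition
\[
\mu_{n-k+1}^2(G)+\mu_{n-k+1}^2(\overline{G})=\tfrac{1}{2}\bigl[(\mu_{n-k+1}(G)+\mu_{n-k+1}(\overline{G}))^2+(\mu_{n-k+1}(G)-\mu_{n-k+1}(\overline{G}))^2\bigr]
\]
and bound the two resulting sums separately: the first by combining the Weyl bound with the variational constraint on $W$, and the second by the Hoffman--Wielandt estimate $\sum_i(\mu_i(A(G))-\mu_i(A(\overline{G})))^2\le \|A(G)-A(\overline{G})\|_F^2=n(n-1)$ restricted to the bottom $s$ indices.

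Main obstacle. The chief difficulty is bounding $\sum_{k=1}^s(\mu_{n-k+1}(G)+\mu_{n-k+1}(\overline{G}))^2$: Weyl supplies only an upper bound of $-1$, so extracting a bound on the squared sum requires a matching lower bound. This is where the rigidity of $P(J-I)P$ (a rank-one positive perturbation of $-I|_W$) is used: together with the trace constraint $\mathrm{tr}(P(J-I)P)=n-\dim W$ and a Cauchy--Schwarz argument on $W$, it forces a trade-off limiting how negative each $\mu_{n-k+1}(G)+\mu_{n-k+1}(\overline{G})$ can be, and summing these balanced estimates should produce the additive $(n/2+s)^2$ bound.
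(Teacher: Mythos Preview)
Your proposal has a genuine quantitative gap that cannot be patched within the outlined framework. In the decomposition
\[
\sum_{k=1}^{s}\bigl(\mu_{n-k+1}^{2}(G)+\mu_{n-k+1}^{2}(\overline{G})\bigr)=\tfrac{1}{2}\Bigl[\sum_{k=1}^{s}(a_k+b_k)^{2}+\sum_{k=1}^{s}(a_k-b_k)^{2}\Bigr],
\]
you bound the second sum by the full Hoffman--Wielandt estimate $\sum_{i=1}^{n}(\mu_i(G)-\mu_i(\overline{G}))^{2}\le n(n-1)$. But then, even if the first sum were \emph{zero}, you would get only $\tfrac{1}{2}n(n-1)$, which already exceeds the target $(n/2+s)^{2}=n^{2}/4+ns+s^{2}$ for every $n>2s$. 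So no bound on $\sum(a_k+b_k)^{2}$ can rescue the argument; the Hoffman--Wielandt step throws away too much. Moreover, your treatment of the ``main obstacle'' is not an argument: knowing the spectrum of $P(J-I)P$ on $W$ says nothing directly about the scalars $a_k+b_k=\mu_{n-k+1}(G)+\mu_{n-k+1}(\overline{G})$, since the $u_k$ are not eigenvectors of $PA(\overline{G})P$, and you give no mechanism connecting the compression to the diagonal-index sums $a_k+b_k$.

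The paper's proof avoids both difficulties by using the \emph{cross-index} Weyl pairing $w_i:=\mu_i(G)+\mu_{n-i+2}(\overline{G})\le -1$ rather than your same-index pairing. The key step is the elementary inequality
\[
\mu_i^{2}(G)=(\overline{\mu}_{n-i+2}-w_i)^{2}\ge \overline{\mu}_{n-i+2}^{2}+nw_i,
\]
valid because $w_i<0$ and $\overline{\mu}_{n-i+2}\ge -n/2$. Summed over $i=2,\ldots,s+1$ (and its symmetric counterpart), this directly compares $\sum_{i=2}^{s+1}(\mu_i^{2}+\overline{\mu}_i^{2})$ with $\sum_{i=1}^{s}(\mu_{n-i+1}^{2}+\overline{\mu}_{n-i+1}^{2})$; since both blocks fit disjointly inside the global identity $\sum_i(\mu_i^{2}+\overline{\mu}_i^{2})=n(n-1)$, the linear term $n\sum w_i$ (controlled via $\sum_i(\mu_i+\overline{\mu}_i)=0$) closes the estimate. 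The essential idea you are missing is this top-versus-bottom comparison; the same-index decomposition into $(a+b)^2$ and $(a-b)^2$ does not exploit it.
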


From (\ref{in3}) we easily obtain another Nordhaus-Gaddum result, similar to
Corollary \ref{cor1}.

\begin{corollary}
If $s\geq1,$ $n>2s,$ and $G$ is a graph of order $n,$ then%
\[%
{\displaystyle\sum\limits_{i=1}^{s}}
\left(  \left\vert \mu_{n-i+1}\left(  G\right)  \right\vert +|\mu
_{n-i+1}(\overline{G})|\right)  \leq\left(  \frac{n}{2}+s\right)  \sqrt{2s}.
\]

\end{corollary}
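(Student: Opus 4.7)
The plan is to derive this corollary directly from Theorem~\ref{ths} using the Cauchy--Schwarz (equivalently, AM--QM) inequality, in exactly the same spirit as Corollary~\ref{cor1} was obtained from Theorem~\ref{thl}.

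First I would set $x_i = |\mu_{n-i+1}(G)|$ and $y_i = |\mu_{n-i+1}(\overline{G})|$ for $i=1,\dots,s$; these are $2s$ nonnegative reals, and by Theorem~\ref{ths}
\[
\sum_{i=1}^{s}\bigl(x_i^{2}+y_i^{2}\bigr) \,\le\, \left(\frac{n}{2}+s\right)^{\!2}.
\]
Viewing $(x_1,\dots,x_s,y_1,\dots,y_s)$ as a vector in $\mathbb{R}^{2s}$ and pairing it with the all-ones vector of length $2s$ via Cauchy--Schwarz gives
\[
\sum_{i=1}^{s}(x_i+y_i) \,\le\, \sqrt{2s}\,\sqrt{\sum_{i=1}^{s}\bigl(x_i^{2}+y_i^{2}\bigr)} \,\le\, \sqrt{2s}\left(\frac{n}{2}+s\right),
\]
which is the desired bound.

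There is essentially no obstacle here: all the spectral content has been packaged into Theorem~\ref{ths}, and the passage from a sum of squares to a sum of magnitudes is the standard AM--QM move. The only point to verify is that one is applying Cauchy--Schwarz to a collection of $2s$ nonnegative numbers rather than $s$, so the factor out front is $\sqrt{2s}$ and not $\sqrt{s}$; since the quantities $|\mu_{n-i+1}(G)|$ and $|\mu_{n-i+1}(\overline{G})|$ are absolute values, sign issues do not arise.
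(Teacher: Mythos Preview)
Your proposal is correct and is exactly the argument the paper has in mind: the paper does not spell out a proof for this corollary but explicitly says it follows from Theorem~\ref{ths} ``similar to Corollary~\ref{cor1}'', i.e., by the AM--QM/Cauchy--Schwarz step you carried out.
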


We also can deduce the following corollary, whose short proof is in Section
\ref{pf}.

\begin{corollary}
\label{cor4}If $s\geq1,$ $n>4^{s},$ and $G$ is a graph of order $n,$ then
\begin{equation}
\mu_{n-s+1}^{2}\left(  G\right)  +\mu_{n-s+1}^{2}(\overline{G})\leq\frac{1}%
{s}\left(  \frac{n}{2}+s\right)  ^{2}. \label{i5}%
\end{equation}

\end{corollary}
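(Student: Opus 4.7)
The plan is to deduce (\ref{i5}) from Theorem \ref{ths} via the pointwise inequality
$$\sum_{i=1}^{s}\mu_{n-i+1}^{2}(H)\geq s\,\mu_{n-s+1}^{2}(H),$$
valid for any graph $H$ on $n\geq 2s-2$ vertices. Once this is in hand, applying it to $H=G$ and $H=\overline{G}$ and summing gives
$$s\bigl(\mu_{n-s+1}^{2}(G)+\mu_{n-s+1}^{2}(\overline{G})\bigr)\leq\sum_{i=1}^{s}\bigl(\mu_{n-i+1}^{2}(G)+\mu_{n-i+1}^{2}(\overline{G})\bigr)\leq\bigl(n/2+s\bigr)^{2}$$
by (\ref{in3}), and dividing through by $s$ yields (\ref{i5}). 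Since $n>4^{s}$ amply implies $n\geq 2s-2$, this reduction is legitimate.

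To establish the pointwise claim I would split on the sign of $\mu_{n-s+1}(H)$. In the easy case $\mu_{n-s+1}(H)\leq 0$, all of $\mu_{n-s+1}(H),\mu_{n-s+2}(H),\ldots,\mu_{n}(H)$ are non-positive, and since they appear in decreasing order, $\mu_{n-s+1}(H)$ has the smallest absolute value among them. Hence $\mu_{n-i+1}^{2}(H)\geq\mu_{n-s+1}^{2}(H)$ for every $i=1,\ldots,s$, and the claim follows by summing.

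The main obstacle is the case $c=\mu_{n-s+1}(H)>0$, in which some of $\mu_{n-s+2}(H),\ldots,\mu_{n}(H)$ could in principle have absolute value smaller than $c$. Here the argument leans on the trace condition: since $\mu_{i}(H)\geq c$ for $i=1,\ldots,n-s+1$, the top $n-s+1$ eigenvalues contribute at least $(n-s+1)c$ to the trace, so $\sum_{i=1}^{n}\mu_{i}(H)=0$ forces the remaining $s-1$ eigenvalues to satisfy $\sum_{i=n-s+2}^{n}\mu_{i}(H)\leq -(n-s+1)c$. Cauchy--Schwarz applied to these $s-1$ numbers then yields
$$\sum_{i=n-s+2}^{n}\mu_{i}^{2}(H)\geq\frac{1}{s-1}\Bigl(\sum_{i=n-s+2}^{n}\mu_{i}(H)\Bigr)^{2}\geq\frac{(n-s+1)^{2}c^{2}}{s-1}.$$
Adding $c^{2}=\mu_{n-s+1}^{2}(H)$, the desired inequality $\sum_{i=1}^{s}\mu_{n-i+1}^{2}(H)\geq sc^{2}$ is equivalent to $(n-s+1)^{2}\geq(s-1)^{2}$, i.e., $n\geq 2s-2$, which holds by hypothesis. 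This closes the hard case and completes the reduction.
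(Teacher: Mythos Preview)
Your argument is correct and genuinely different from the paper's. The paper invokes Theorem~\ref{tR}, a Ramsey-type fact, to force $\mu_{n-s+1}(G)\le 0$ and $\mu_{n-s+1}(\overline{G})\le 0$; this is precisely why the hypothesis $n>4^{s}$ appears. With non-positivity in hand, only your ``easy case'' monotonicity argument is needed, and the paper's proof is two lines.

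You instead bypass Ramsey theory entirely: your Case~2 handles the possibility $\mu_{n-s+1}(H)>0$ directly via the trace identity and Cauchy--Schwarz on the bottom $s-1$ eigenvalues. This is more elementary and, importantly, yields a strictly stronger result: your pointwise inequality needs only $n\ge 2s-2$, and Theorem~\ref{ths} needs only $n>2s$, so your proof actually establishes (\ref{i5}) under the much weaker hypothesis $n>2s$ rather than $n>4^{s}$. (Note that Case~2 cannot occur for $s=1$, since $\mu_{n}(H)\le 0$ for every graph $H$; for $s\ge 2$ your division by $s-1$ is legitimate.) The price is a slightly longer argument; the payoff is dropping the exponential hypothesis and any appeal to Ramsey numbers.
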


Note that the right side of (\ref{i5}) includes low order terms. Such terms
may be reduced but not removed completely, at least for some values of $s:$
e.g., if $s=1,$ taking the complete balanced bipartite graph $K_{n/2,n/2},$ we
see that
\[
\mu_{n-s+1}^{2}\left(  K_{n/2,n/2}\right)  +\mu_{n-s+1}^{2}(\overline
{K_{n/2,n/2}})=\frac{n^{2}}{4}+1.
\]

We shall use Corollary \ref{cor4} to obtain a new bound on $f_{n-s}\left(
n\right)  $ as well.

\begin{theorem}
\label{ths1}If $s\geq1,$ $n\geq4^{s},$ and $G$ is a graph of order $n,$ then
\[
\left\vert \mu_{n-s+1}\left(  G\right)  \right\vert +|\mu_{n-s+1}(\overline
{G})|\,\leq\frac{n}{\sqrt{2s}}+1.
\]

\end{theorem}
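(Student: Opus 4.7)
Set $a=|\mu_{n-s+1}(G)|$ and $b=|\mu_{n-s+1}(\overline{G})|$. Since $n\geq 4^{s}$, Corollary~\ref{cor4} supplies
\[
a^{2}+b^{2}\,\leq\,\tfrac{1}{s}\Bigl(\tfrac{n}{2}+s\Bigr)^{2},
\]
whence the AM--QM inequality gives $a+b\leq(n/2+s)\sqrt{2/s}=n/\sqrt{2s}+\sqrt{2s}$. This is a valid upper bound for $f_{n-s}(n)$, but it exceeds the target by exactly $\sqrt{2s}-1$, so the bulk of the work consists of shaving off this additive term.

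To close the gap I would replicate, in this setting, the sharpening trick of \cite{Nik06a} that the authors already invoke to deduce Theorem~\ref{cor2} from Theorem~\ref{thl1}. Arguing by contradiction, suppose $a+b>n/\sqrt{2s}+1$. Under the hypothesis $n\geq 4^{s}$, all $s$ smallest eigenvalues of both $G$ and $\overline{G}$ should be non-positive, so for every $i\in\{1,\dots,s\}$ one has $|\mu_{n-i+1}(G)|\geq a$ and $|\mu_{n-i+1}(\overline{G})|\geq b$. Combining this monotonicity with the Weyl-type constraint
\[
\mu_{n-s+1}(G)+\mu_{n-s+1}(\overline{G})\,\leq\,\mu_{n-2s+2}(J-I)\,=\,-1,
\]
valid for $n\geq 2s$ and stemming from the identity $A(G)+A(\overline{G})=J-I$, and with Cauchy interlacing applied after the deletion of a vertex (say, of extremal degree) from $G$, one obtains a strengthened version of Theorem~\ref{ths} whose right-hand side strictly contradicts the supposition. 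Translated back through AM--QM, this yields the announced bound $a+b\leq n/\sqrt{2s}+1$.

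The main obstacle is quantitative. Plain monotonicity $|\mu_{n-i+1}(G)|\geq a$ already enters the proof of Corollary~\ref{cor4}, and by itself merely reproduces the weaker AM--QM estimate. Producing the sharp constant $1$ in place of $\sqrt{2s}$ requires exhibiting a quantitative gap between $|\mu_{n-i+1}(G)|$ and $a$ (and between $|\mu_{n-i+1}(\overline{G})|$ and $b$) for some index $i<s$, and then plugging this gap back into Theorem~\ref{ths}. The hypothesis $n\geq 4^{s}$---considerably stronger than what Corollary~\ref{cor4} alone requires---is precisely what powers this gap, via an interlacing-and-counting argument of the type pioneered in \cite{Nik06a}. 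Calibrating this gap precisely so that the saving is exactly $\sqrt{2s}-1$ (rather than some intermediate value) is the delicate part of the proof.
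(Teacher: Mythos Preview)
Your setup is right: Corollary~\ref{cor4} plus AM--QM gives $a+b\le n/\sqrt{2s}+\sqrt{2s}$, and the task is to replace $\sqrt{2s}$ by $1$. You are also right that the paper does this by replicating the device from the proof of Theorem~\ref{cor2}. However, you have misidentified that device. It is \emph{not} a vertex-deletion-and-interlacing argument; it is the \emph{blow-up} argument based on Propositions~\ref{pro1} and~\ref{pro2}. Your proposed route via a ``strengthened version of Theorem~\ref{ths}'' is never made concrete, and deleting a single vertex cannot manufacture a saving of exactly $\sqrt{2s}-1$; you yourself flag this calibration as ``the delicate part'' and then do not carry it out. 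As written this is a plan with an unfilled gap at the decisive step.

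Here is what actually happens. Under the contradiction hypothesis $a+b>n/\sqrt{2s}+1$ one first checks, using Theorem~\ref{tR} and Lemma~\ref{a2}, that in fact $\mu_{n-s+1}(G)<-1$ and $\mu_{n-s+1}(\overline{G})<-1$. Now set $H=G^{(t)}$, a graph of order $tn$. By Proposition~\ref{pro1} the $s$ smallest eigenvalues of $H$ are $t\mu_{n},\ldots,t\mu_{n-s+1}$, so $\mu_{tn-s+1}(H)=t\mu_{n-s+1}(G)$. By Proposition~\ref{pro2}, and because $\overline{\mu}_{n-s+1}<-1$ forces $t\overline{\mu}_{n-s+1}+t-1<-1$ to sit below the $n(t-1)$ copies of $-1$, one gets $\mu_{tn-s+1}(\overline{H})=t\overline{\mu}_{n-s+1}+t-1$. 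Hence
\[
|\mu_{tn-s+1}(H)|+|\mu_{tn-s+1}(\overline{H})|\;=\;t(a+b)-(t-1).
\]
Applying the crude AM--QM bound (via Corollary~\ref{cor4}) to $H$, whose order $tn$ still satisfies $tn\ge 4^{s}$, yields
\[
t(a+b)-(t-1)\;\le\;\frac{tn}{\sqrt{2s}}+\sqrt{2s},\qquad\text{i.e.}\qquad a+b\;\le\;\frac{n}{\sqrt{2s}}+\frac{t-1}{t}+\frac{\sqrt{2s}}{t}.
\]
Letting $t\to\infty$ contradicts the assumption. The mechanism is that the blow-up scales $a+b$ by $t$ while the additive error stays bounded, so it vanishes in the limit; no quantitative interlacing gap is required at all.
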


All above bounds are essentially best possible whenever $s=2^{k}+1$ and $n$ is
sufficiently large.

\begin{theorem}
\label{thlb}Let $s=2^{k-1}+1$ for some integer $k\geq1.$ There exists
infinitely many graphs $G$ such that if $2\leq i\leq s,$ then%
\begin{align*}
\mu_{i}\left(  G\right)   &  \geq\frac{v\left(  G\right)  }{2\sqrt{2\left(
s-1\right)  }}-1,\text{ \ \ \ }\mu_{n-i+2}\left(  G\right)  \leq
-\frac{v\left(  G\right)  }{2\sqrt{2\left(  s-1\right)  }},\\
\mu_{i}\left(  \overline{G}\right)   &  \geq\frac{v\left(  G\right)  }%
{2\sqrt{2\left(  s-1\right)  }}-1,\text{ \ \ \ }\mu_{n-i+2}\left(
\overline{G}\right)  \leq-\frac{v\left(  G\right)  }{2\sqrt{2\left(
s-1\right)  }}.
\end{align*}

\end{theorem}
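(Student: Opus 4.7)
My plan is to prove Theorem~\ref{thlb} by exhibiting an explicit infinite family of graphs $G$ attaining all four bounds simultaneously. Since the target value $n/(2\sqrt{2(s-1)})$ is linear in $n$, any extremal family must be of bounded rank, so $G$ must arise as a blow-up of some bounded-size seed graph $G_0$ by $\overline{K_N}$ with $N \to \infty$. Writing $t = s-1 = 2^{k-1}$ and $|G_0| = v$, set $G = G_0[\overline{K_N}]$ on $n = vN$ vertices, so that the spectrum of $G$ is $\{N \lambda_i(G_0) : 1 \leq i \leq v\} \cup \{0,\ldots,0\}$. The required threshold $n/(2\sqrt{2t}) = N \cdot v/(2\sqrt{2t})$ then matches the middle eigenvalues precisely if $G_0$ is engineered so that $\lambda_i(G_0) = v/(2\sqrt{2t})$ for $2 \leq i \leq s$ and $\lambda_{v-i+2}(G_0) = -v/(2\sqrt{2t})$ for $2 \leq i \leq s$.

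The seed $G_0$ is to be built from a Hadamard matrix $H$ of order $2t = 2^k$, provided by the Sylvester construction. A natural candidate is a bipartite graph $G_0$ whose biadjacency is a $\{0,1\}$-matrix $B$ obtained from $H$ (for instance $B = (H + J)/2$, possibly after a bipartite doubling or other size adjustment). The Hadamard orthogonality $H H^{\top} = 2t \cdot I$ translates into $B B^{\top} = (t/2) I + R$ with $R$ of small rank, so that most singular values of $B$ equal the common value $\sqrt{t/2}$ and only a few Perron-type directions absorb the perturbation. After a suitable scaling of the seed, these middle singular values become exactly $v/(2\sqrt{2t})$, delivering the required spectrum.

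For $\overline{G}$ one observes that $\overline{G} = \overline{G_0}[K_N]$ is a blow-up by cliques, and its spectrum is, up to an additive shift by $N-1$ and the appearance of eigenvalue $-1$ in place of $0$, the blow-up of the spectrum of $\overline{G_0}$. A parallel application of the Hadamard identity to the complement pattern $J - B$ (using the fact that $-H$ is also Hadamard) yields the analogous spectral distribution. All four inequalities in the theorem then follow from one block-matrix spectral computation applied to $B$ and to $J - B$, with the inevitable $O(1)$ lower-order corrections absorbed by the $-1$ slack built into the statement; letting $N \to \infty$ produces the required infinite family.

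The main obstacle is the precise combinatorial design of $B$ from $H$: the Hadamard identity only controls $|\lambda_i(G_0)|$ through the squared singular values, and sign-tracking is needed to ensure that $G_0$ has exactly $s-1$ middle eigenvalues equal to $+v/(2\sqrt{2t})$ and $s-1$ equal to $-v/(2\sqrt{2t})$ (rather than, say, $s$ positive and $s-2$ negative). Arranging this for both $B$ and $J - B$ simultaneously is the essential use of the ``power of two'' hypothesis $s = 2^{k-1} + 1$ and is the technical heart of the proof; the choice of a bipartite seed $G_0$ helps here because it forces the spectrum of $G_0$ to be symmetric about zero. Once this spectral design is completed, verification of the four inequalities is a direct computation via the tensor-product structure of the adjacency matrix.
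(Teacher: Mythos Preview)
Your overall strategy---take a Hadamard-based seed matrix of order $O(s)$ whose non-Perron spectrum is concentrated at $\pm v/(2\sqrt{2(s-1)})$, blow it up, and absorb the $O(1)$ diagonal correction into the $-1$ slack---is exactly what the paper does. The paper's seed is the recursively defined $(0,1)$ matrix $A_{k+1}$ of order $2^{k+1}$, built via $A_{k+1}=\tfrac12((2A_k-J)\otimes B+J)$; Lemma~\ref{le4} shows its spectrum is $2^k,\ (2^{k/2})^{[2^{k-1}]},\ 0^{[2^k-1]},\ (-2^{k/2})^{[2^{k-1}]}$, and the proof of Theorem~\ref{thlb} then tensors with $J_t$, zeros the diagonal, and applies Weyl.

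Where your plan diverges---and where it has a real gap---is the choice of a \emph{bipartite} seed $G_0$ and the treatment of the complement. You write that the complement is handled ``by a parallel application of the Hadamard identity to the complement pattern $J-B$,'' but $J-B$ is the biadjacency of the \emph{bipartite} complement, not of $\overline{G_0}$. The actual complement of a balanced bipartite $G_0$ has adjacency $J_{2m}-I-A(G_0)$, whose non-Perron eigenvalues are $-1-\lambda_i(G_0)$; in particular the bipartite eigenvalue $-d$ of $G_0$ produces a large positive eigenvalue $d-1$ in $\overline{G_0}$ that sits above your intended $\mu_2(\overline{G_0})$ and shifts all indices by one. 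So the sign-tracking problem you flag as ``the technical heart'' is not merely unfinished---the bipartite ansatz actively works against you on the complement side.

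The paper sidesteps this by using a \emph{non-bipartite} regular seed whose non-Perron spectrum is symmetric about $0$; for such a matrix, $A_{k+1}$ and $J-A_{k+1}$ have identical spectra (Lemma~\ref{le3}), so the analysis of $\overline G$ is literally the same computation as for $G$. That self-complementarity of the seed spectrum is the key design feature your plan is missing.
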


\section{\label{pf}Proofs}

For graph notation and concepts undefined here, we refer the reader to
\cite{Bol98}. In particular, if $G$ is a graph, we write $v\left(  G\right)  $
for the number of vertices of $G.$ For short we set $\mu_{i}:=\mu_{i}\left(
G\right)  ,$ $\mu:=\mu\left(  G\right)  ,$ $\overline{\mu}_{i}:=\mu_{i}\left(
\overline{G}\right)  ,$ and $\overline{\mu}:=\mu\left(  \overline{G}\right)
.$

\subsection{Some useful observations}

\begin{lemma}
\label{a1}Let $G$ be a graph of order $n.$ If $X\subset\left\{  2,\ldots
,n\right\}  ,$ then%
\[
\sum_{i\in X}\mu_{i}^{2}\left(  G\right)  \leq n^{2}/4.
\]

\end{lemma}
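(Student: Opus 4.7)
The plan is to use two standard facts: that the sum of squares of all eigenvalues equals $2e(G)$ (where $e(G)$ is the number of edges), and that $\mu_1(G) \geq 2e(G)/n$ (the largest eigenvalue dominates the average degree, via the Rayleigh quotient with the all-ones vector). Combining these gives an upper bound on $\sum_{i=2}^n \mu_i^2(G)$ in terms of $e(G)$ alone, which we then maximize over $e(G)$.

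First I would note that since all $\mu_i^2 \geq 0$ and $X \subset \{2,\ldots,n\}$, it suffices to prove the bound for $X = \{2,\ldots,n\}$. Next, write
\[
\sum_{i=2}^{n}\mu_i^2(G) \;=\; \sum_{i=1}^{n}\mu_i^2(G) - \mu_1^2(G) \;=\; 2e(G) - \mu^2(G),
\]
and apply $\mu(G) \geq 2e(G)/n$ to get
\[
\sum_{i=2}^{n}\mu_i^2(G) \;\leq\; 2e(G) - \frac{4e(G)^2}{n^2}.
\]

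The final step is to observe that the right-hand side, viewed as a function of $e = e(G)$, is a downward parabola in $e$ attaining its global maximum at $e = n^2/4$, with maximum value $n^2/4$. Hence regardless of the actual value of $e(G)$,
\[
\sum_{i=2}^{n}\mu_i^2(G) \;\leq\; \frac{n^2}{4},
\]
which yields the claim.

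There is no real obstacle; the only thing to be careful about is to note that the bound is valid for every $e \in [0, \binom{n}{2}]$ (not just when $e(G)$ is near $n^2/4$), which follows because the parabola is concave.
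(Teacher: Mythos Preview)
Your proof is correct and is essentially identical to the paper's: both use $\sum_i \mu_i^2 = 2e(G)$, the bound $\mu \geq 2e(G)/n$, and then maximize $2e - (2e/n)^2$ over $e$ to obtain $n^2/4$. The only cosmetic difference is that you first reduce to $X=\{2,\ldots,n\}$, whereas the paper writes $\mu^2 + \sum_{i\in X}\mu_i^2 \leq \sum_{i=1}^n \mu_i^2$ directly.
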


\begin{proof}
Indeed, if $A$ is the adjacency matrix of $G,$ then
\[
\mu^{2}+\sum_{i\in X}\mu_{i}^{2}\leq\sum_{i=1}^{n}\mu_{i}^{2}=\mathrm{tr}%
\left(  A^{2}\right)  =2e\left(  G\right)  .
\]
Hence, in view of $\mu\geq2e\left(  G\right)  /n,$ we find that
\[
\sum_{i\in X}\mu_{i}^{2}\leq2e\left(  G\right)  -\mu^{2}\leq2e\left(
G\right)  -\left(  2e\left(  G\right)  /n\right)  ^{2}\leq n^{2}/4,
\]
completing the proof.
\end{proof}

\begin{lemma}
\label{a2}Let $n\geq s\geq2,$ and let $G$ be a graph of order $n.$ If $\mu
_{s}\leq0,$ then%
\[
\left\vert \mu_{s}\right\vert \leq\frac{n}{2\sqrt{n-s+1}}.
\]

\end{lemma}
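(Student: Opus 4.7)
The plan is to refine the proof of Lemma \ref{a1} by exploiting the sign assumption on $\mu_{s}$. Since $\mu_{1}\geq\mu_{2}\geq\cdots\geq\mu_{n}$ and $\mu_{s}\leq 0$, every eigenvalue with index in $\{s,s+1,\ldots,n\}$ lies in the interval $[\mu_{n},\mu_{s}]\subset(-\infty,0]$, hence $\mu_{j}^{2}\geq\mu_{s}^{2}$ for each such $j$. This yields the key inequality
$$
(n-s+1)\,\mu_{s}^{2}\ \leq\ \sum_{j=s}^{n}\mu_{j}^{2}\ \leq\ \sum_{j=1}^{n}\mu_{j}^{2}-\mu_{1}^{2}\ =\ 2e(G)-\mu_{1}^{2},
$$
where the last equality is the standard trace identity $\sum_{j}\mu_{j}^{2}=\operatorname{tr}(A^{2})=2e(G)$.

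Next I would use $\mu_{1}=\mu(G)\geq 2e(G)/n$ (the Rayleigh quotient applied to the all-ones vector, or equivalently the fact that $\mu_{1}$ is at least the average degree). Squaring and substituting in the previous line gives
$$
(n-s+1)\,\mu_{s}^{2}\ \leq\ 2e(G)-\left(\frac{2e(G)}{n}\right)^{2}.
$$
Setting $x=2e(G)$, the right-hand side is $x-x^{2}/n^{2}$, a concave function of $x$ maximized at $x=n^{2}/2$ with maximum value $n^{2}/4$. Therefore $(n-s+1)\mu_{s}^{2}\leq n^{2}/4$, and taking square roots gives the stated bound $|\mu_{s}|\leq n/(2\sqrt{n-s+1})$.

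There is no real obstacle here; the argument is essentially Lemma \ref{a1} with one extra observation. The only substantive point is the realization that the hypothesis $\mu_{s}\leq 0$ forces $n-s+1$ eigenvalues to have squared value at least $\mu_{s}^{2}$, which is what lets us improve the Lemma \ref{a1} bound by a factor of $n-s+1$. Everything else (the trace identity, the bound $\mu_{1}\geq 2e(G)/n$, and the elementary maximization of $x-x^{2}/n^{2}$) is routine.
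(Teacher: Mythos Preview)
Your proof is correct and is essentially identical to the paper's argument: the paper observes that $\mu_{n}\leq\cdots\leq\mu_{s}\leq 0$ gives $(n-s+1)\mu_{s}^{2}\leq\sum_{i=s}^{n}\mu_{i}^{2}$ and then invokes Lemma~\ref{a1}, whose proof is exactly the trace identity, the bound $\mu_{1}\geq 2e(G)/n$, and the maximization of $x-x^{2}/n^{2}$ that you wrote out in full.
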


\begin{proof}
Indeed, since $\mu_{n}\left(  G\right)  \leq\cdots\leq\mu_{s}\left(  G\right)
\leq0,$ we see that
\[
\sum_{i=s}^{n}\mu_{i}^{2}\geq\sum_{i=s}^{n}\mu_{s}^{2}=\left(  n-s+1\right)
\mu_{s}^{2},
\]
and the assertion follows by Lemma \ref{a1}.
\end{proof}

\begin{theorem}
\label{tR}Let $k\geq0$ and $n\geq4^{k}.$ If $G$ is a graph of order $n,$ then
either
\[
\mu_{n-k+1}\left(  G\right)  \leq-1\text{ \ \ and \ \ }\mu_{n-k+1}%
(\overline{G})\leq0
\]
or%
\[
\mu_{n-k+1}(\overline{G})\leq-1\text{ \ \ and \ \ }\mu_{n-k+1}\left(
G\right)  \leq0.
\]

\end{theorem}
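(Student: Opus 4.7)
The plan is to deduce Theorem \ref{tR} from a classical Ramsey bound combined with Cauchy's eigenvalue interlacing inequality. First I would observe that the standard Ramsey estimate $R(k+1,k+1) \leq \binom{2k}{k}$, together with the trivial inequality $\binom{2k}{k} \leq 4^{k}$, shows that $n \geq 4^{k}$ is enough to force every graph $G$ on $n$ vertices to contain either a clique of size $k+1$ or an independent set of size $k+1$. Equivalently, at least one of $G$, $\overline G$ must contain $K_{k+1}$ as an induced subgraph.

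Next, by symmetry, I may assume that $G$ contains $K_{k+1}$ as an induced subgraph; then $\overline G$ automatically contains $\overline{K_{k+1}}$, the empty graph on $k+1$ vertices, as an induced subgraph. The eigenvalues of $K_{k+1}$ are $k, -1, -1, \ldots, -1$, so its second-largest eigenvalue equals $-1$, while those of $\overline{K_{k+1}}$ are all $0$. Applying Cauchy interlacing to each of these induced subgraphs (with $m=k+1$) gives
\[
\mu_{n-k+1}(G) \leq \mu_{2}(K_{k+1}) = -1 \qquad \text{and} \qquad \mu_{n-k}(\overline G) \leq \mu_{1}(\overline{K_{k+1}}) = 0.
\]
Since eigenvalues are nonincreasing, $\mu_{n-k+1}(\overline G) \leq \mu_{n-k}(\overline G) \leq 0$, which is the first alternative of the theorem. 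The second alternative follows by exchanging the roles of $G$ and $\overline G$.

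The only substantive decision in this plan is matching the Ramsey-type bound to the $n \geq 4^{k}$ hypothesis, and for this the crude estimate $\binom{2k}{k} \leq 4^{k}$ suffices; the interlacing step is completely mechanical, so I anticipate no real obstacle. The boundary case $k=0$ is vacuous under the natural convention that the nonexistent eigenvalue $\mu_{n+1}$ is read as $-\infty$.
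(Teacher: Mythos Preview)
Your proposal is correct and follows essentially the same approach as the paper: invoke the Ramsey bound $R(k+1,k+1)\le\binom{2k}{k}\le 4^{k}$ to find an induced $K_{k+1}$ in $G$ or $\overline G$, then apply Cauchy interlacing to the clique and its complement. The only cosmetic difference is that the paper applies interlacing with index $i=2$ directly to $\overline{K_{k+1}}$ to obtain $\mu_{n-k+1}(\overline G)\le\mu_2(\overline{K_{k+1}})=0$, whereas you use $i=1$ to get $\mu_{n-k}(\overline G)\le 0$ and then monotonicity; both are fine.
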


\begin{proof}
A classical bound of Ramsey theory implies that every graph of order at least
$4^{k}$ contains either a complete graph on $k+1$ vertices or an independent
set on $k+1$ vertices. Suppose that $G$ contains a complete graph on $k+1$
vertices, and so $\overline{G}$ contains an independent set on $k+1$ vertices.
For an induced subgraph $H$ of graph $G,$ the Cauchy interlacing theorem
implies that
\[
\mu_{m-i}\left(  H\right)  \geq\mu_{n-i}\left(  G\right)
\]
for all $i=0,\ldots,v\left(  H\right)  -1;$ therefore,
\[
\mu_{n-k+1}\left(  G\right)  \leq\mu_{2}\left(  K_{k+1}\right)  =-1\text{
\ \ and \ \ }\mu_{n-k+1}(\overline{G})\leq\mu_{2}\left(  \overline{K_{k+1}%
}\right)  =0
\]
as claimed.
\end{proof}

Using Weyl's inequalities (\cite{HoJo88}, p. 181), we come up with the
following pair of useful bounds:

\begin{lemma}
[Weyl]If $G$ is a graph of order $n$ and $2\leq k\leq n,$ then%
\begin{align}
\mu_{k}\left(  G\right)  +\mu_{n-k+2}(\overline{G})  &  \leq-1,\label{wi1}\\
\mu_{k}\left(  G\right)  +\mu_{n-k+1}(\overline{G})  &  \geq-1. \label{wi2}%
\end{align}

\end{lemma}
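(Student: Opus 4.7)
The plan is to use the basic identity $A(G) + A(\overline{G}) = J - I$, where $J$ is the all-ones matrix and $I$ the identity of order $n$, and then read off the two bounds from the standard Weyl eigenvalue inequalities applied to this sum. The matrix $J - I$ has eigenvalue $n-1$ with multiplicity $1$ and eigenvalue $-1$ with multiplicity $n-1$; in particular $\mu_2(J-I) = \mu_n(J-I) = -1$, so every eigenvalue of $J-I$ except the top one contributes the constant $-1$ that appears on the right of \eqref{wi1} and \eqref{wi2}.

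Recall Weyl's two-sided bounds: for Hermitian $M, N$ of order $n$,
\[
\lambda_{i+j-1}(M+N) \le \lambda_i(M) + \lambda_j(N) \quad\text{whenever}\quad i+j-1 \le n,
\]
\[
\lambda_{i+j-n}(M+N) \ge \lambda_i(M) + \lambda_j(N) \quad\text{whenever}\quad i+j-n \ge 1.
\]
I would apply these with $M = A(G)$, $N = A(\overline{G})$, so that $M+N = J-I$. For \eqref{wi1}, take $i = k$ and $j = n-k+2$, which gives $i+j-n = 2$; the second Weyl inequality then yields
\[
-1 = \mu_2(J-I) \ge \mu_k(G) + \mu_{n-k+2}(\overline{G}),
\]
which is precisely \eqref{wi1}, valid for $k \ge 2$. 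For \eqref{wi2}, take $i = k$ and $j = n-k+1$, so $i+j-1 = n$; the first Weyl inequality gives
\[
-1 = \mu_n(J-I) \le \mu_k(G) + \mu_{n-k+1}(\overline{G}),
\]
which is \eqref{wi2}.

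I do not anticipate any real obstacle: the argument is a direct bookkeeping exercise in choosing the indices so that $i+j-n$ (or $i+j-1$) lands on a position where $J-I$ has eigenvalue $-1$ rather than $n-1$. The only care needed is confirming that the hypothesis $2 \le k \le n$ keeps the relevant index in the range $\{2, \dots, n\}$, which it does in both cases: for \eqref{wi1} the index $2$ requires $k \ge 2$, and for \eqref{wi2} the index $n$ is automatic.
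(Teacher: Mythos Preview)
Your proof is correct and is exactly the argument the paper has in mind: the paper simply states the lemma as a consequence of Weyl's inequalities (citing \cite{HoJo88}) without spelling out the index choices, and your write-up supplies those details via $A(G)+A(\overline{G})=J-I$ together with $\mu_2(J-I)=\mu_n(J-I)=-1$.
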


\subsection{Blow-ups of graphs}

For any graph $G$ and integer $t\geq1,$ write $G^{\left(  t\right)  }$ for the
graph obtained by replacing each vertex $u$ of $G$ by a set $V_{u}$ of $t$
independent vertices and every edge $\left\{  u,v\right\}  $ of $G$ by a
complete bipartite graph with parts $V_{u}$ and $V_{v}.$ Usually $G^{\left(
t\right)  }$ is called a blow-up of $G.$ Blow-up graphs have a very useful
algebraic relation to $G:$ thus, if $A$ is the adjacency matrix of $G,$ then
the adjacency matrix $A\left(  G^{\left(  t\right)  }\right)  $ of $G^{\left(
t\right)  }$ is given by
\[
A\left(  G^{\left(  t\right)  }\right)  =A\otimes J_{t}%
\]
where $\otimes$ is the Kronecker product and $J_{t}$ is the all ones matrix of
order $t.$ This observation yields the following fact.

\begin{proposition}
\label{pro1}The\ eigenvalues of $G^{\left(  t\right)  }$ are $t\mu_{1}\left(
G\right)  ,\ldots,t\mu_{n}\left(  G\right)  ,$ together with $n\left(
t-1\right)  $ additional $0$'s.
\end{proposition}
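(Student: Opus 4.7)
The plan is to deduce the spectrum of $G^{(t)}$ directly from the identity $A(G^{(t)}) = A \otimes J_t$ already noted in the excerpt, combined with the standard fact that the eigenvalues of a Kronecker product $M \otimes N$ of symmetric matrices are exactly the pairwise products $\lambda_i(M)\,\lambda_j(N)$, each listed with the appropriate multiplicity.

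First I would compute the spectrum of $J_t$. Since $J_t = \mathbf{1}_t \mathbf{1}_t^{T}$ is symmetric of rank one with trace $t$, its eigenvalues are $t$ (simple, with eigenvector $\mathbf{1}_t$) and $0$ with multiplicity $t-1$ (the orthogonal complement of $\mathbf{1}_t$). Now let $\mu_1(G), \ldots, \mu_n(G)$ be the eigenvalues of $A$ with an orthonormal eigenbasis $x_1, \ldots, x_n$. Pairing these with the spectrum of $J_t$ via the Kronecker rule produces $n$ eigenvalues of the form $t\mu_i(G)$ (coming from the eigenvalue $t$ of $J_t$) together with $n(t-1)$ zero eigenvalues (coming from pairing each $\mu_i$ with each of the $t-1$ zero eigenvalues of $J_t$). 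Since this accounts for $nt = v(G^{(t)})$ values, the list is complete and the proposition follows.

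For an entirely elementary route that avoids quoting the Kronecker eigenvalue rule, I would simply exhibit the eigenvectors. Indeed, if $A x_i = \mu_i x_i$, then
\[
(A \otimes J_t)(x_i \otimes \mathbf{1}_t) = (A x_i) \otimes (J_t \mathbf{1}_t) = (\mu_i x_i) \otimes (t \mathbf{1}_t) = t\mu_i\,(x_i \otimes \mathbf{1}_t),
\]
which gives the $n$ eigenvalues $t\mu_i(G)$. On the other hand, if $y_1, \ldots, y_{t-1}$ is any orthonormal basis of the hyperplane $\{y \in \mathbb{R}^t : \mathbf{1}_t^T y = 0\}$, then for every $i$ and $j$ we have $(A \otimes J_t)(x_i \otimes y_j) = (A x_i) \otimes (J_t y_j) = 0$, providing $n(t-1)$ linearly independent eigenvectors with eigenvalue $0$. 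A dimension count confirms that these $nt$ vectors form a basis, yielding the claimed spectrum.

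No step here presents a genuine obstacle; the argument is purely linear-algebraic and is essentially the textbook derivation of the spectrum of a Kronecker product. The only mild care needed is bookkeeping of multiplicities when $\mu_i(G) = 0$ for some $i$, but this is handled automatically by listing the zero eigenvalues with their total multiplicity $n(t-1)$ (to which the zero values among the $t\mu_i(G)$ simply contribute).
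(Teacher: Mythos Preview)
Your argument is correct and follows exactly the route the paper intends: the paper states the proposition as an immediate consequence of the identity $A(G^{(t)}) = A \otimes J_t$ and gives no further details, so your derivation via the spectrum of $J_t$ and the Kronecker eigenvalue rule (with the explicit eigenvector construction as a self-contained alternative) simply fills in what the paper leaves implicit.
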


We also want to find the eigenvalues of the complements of graph blow-ups.
Given a graph $G$ and an integer $t>0,$ set $G^{\left[  t\right]  }%
=\overline{\overline{G}^{\left(  t\right)  }},$ i.e., $G^{\left[  t\right]  }$
is obtained from $G^{\left(  t\right)  }$ by joining all vertices within
$V_{u}$ for every vertex $u$ of $G.$ We easily can check the following fact.

\begin{proposition}
\label{pro2}The\ eigenvalues of $G^{\left[  t\right]  }$ are $t\mu_{1}\left(
G\right)  +t-1,\ldots,t\mu_{n}\left(  G\right)  +t-1,$ together with $n\left(
t-1\right)  $ additional $\left(  -1\right)  $'s.
\end{proposition}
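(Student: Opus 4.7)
The plan is to write down the adjacency matrix of $G^{[t]}$ as a single Kronecker-product expression and simply read off its spectrum.

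First I would observe that $G^{[t]}$ is obtained from the blow-up $G^{(t)}$ by planting a copy of $K_t$ on every class $V_u$; therefore its adjacency matrix satisfies
$$A(G^{[t]}) = A(G)\otimes J_t + I_n\otimes(J_t - I_t) = (A(G)+I_n)\otimes J_t - I_{nt}.$$
This algebraic identity is essentially the whole content of the proof, and it is also the one step one must get right in order for everything else to fall into place.

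With this in hand, I would diagonalize $(A(G)+I_n)\otimes J_t$ using the standard Kronecker-product recipe. Let $v_1,\ldots,v_n$ be an orthonormal eigenbasis of $A(G)$ with $A(G)v_i=\mu_i v_i$, so that $(A(G)+I_n)v_i=(\mu_i+1)v_i$. Let $w_0$ be the all-ones vector of $\mathbb{R}^t$ and let $w_1,\ldots,w_{t-1}$ be any orthonormal basis of its orthogonal complement; then $J_t w_0 = t w_0$ and $J_t w_j = 0$ for $j\geq 1$. The $nt$ vectors $v_i\otimes w_j$ form an orthonormal eigenbasis for $(A(G)+I_n)\otimes J_t$ with eigenvalue $t(\mu_i+1)$ on $v_i\otimes w_0$ and eigenvalue $0$ on every $v_i\otimes w_j$ with $j\geq 1$. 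Since $I_{nt}$ acts as a scalar on every vector, subtracting it shifts each eigenvalue by $-1$, producing the eigenvalues $t\mu_i(G)+t-1$ for $i=1,\ldots,n$ together with $-1$ repeated $n(t-1)$ times.

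The only genuine hurdle is setting up the matrix decomposition cleanly in the first step; once the commuting structure (everything built from $I_n$, $A(G)$, $I_t$, and $J_t$) is exposed, no further obstacle remains, and the result follows exactly as in Proposition \ref{pro1}, with the extra $+(t-1)$ shift on the nonzero block and $-1$ in place of $0$ on the null block accounted for by the two correction terms $I_n\otimes I_t$ and $I_n\otimes J_t$.
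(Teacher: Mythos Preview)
Your argument is correct. The key identity
\[
A(G^{[t]}) \;=\; A(G)\otimes J_t + I_n\otimes(J_t-I_t) \;=\; (A(G)+I_n)\otimes J_t - I_{nt}
\]
is exactly right, and once it is in place the spectrum follows immediately from the standard Kronecker-product rule and a global shift by $-1$. The paper itself gives no proof of this proposition---it merely says ``We easily can check the following fact''---so there is nothing substantive to compare your route against; your write-up supplies precisely the verification the paper omits.
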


\subsection{Proofs}

\begin{proof}
[\textbf{Proof of Theorem \ref{thl}}]Let $2\leq i\leq s.$ First, we shall show
that
\begin{equation}
\mu_{i}^{2}\leq\mu_{s+i-1}^{2}+\overline{\mu}_{n-i+2}^{2}. \label{in11}%
\end{equation}
Indeed, if $\mu_{i}\geq0,$ then (\ref{wi1}) implies that $\overline{\mu
}_{n-i+2}\leq0,$ and so
\[
\mu_{i}^{2}<\left(  \mu_{i}+1\right)  ^{2}\leq\overline{\mu}_{n-i+2}^{2}.
\]
On the other hand, if $\mu_{i}<0,$ then $\mu_{s+i-1}\leq\mu_{i}<0$ implies
that $\mu_{i}^{2}\leq\mu_{s+i-1}^{2}.$ So (\ref{in11}) is always true.

Further, we obviously have%
\[
\mu^{2}+\sum_{i=2}^{n}\mu_{i}^{2}+\overline{\mu}^{2}+\sum_{i=2}^{n}%
\overline{\mu}_{i}^{2}=2e\left(  G\right)  +2e\left(  \overline{G}\right)
=n(n-1).
\]
Note that Weyl's inequality (\ref{in11}) implies that%
\begin{equation}
2\sum_{i=2}^{s}\mu_{i}^{2}\leq\sum_{i=2}^{s}\mu_{i}^{2}+\sum_{i=s+1}^{2s-1}%
\mu_{i}^{2}+\sum_{i=n-s+2}^{n}\overline{\mu}_{i}^{2}, \label{i1}%
\end{equation}
and by symmetry,%
\begin{equation}
2\sum_{i=2}^{s}\overline{\mu}_{i}^{2}\leq\sum_{i=2}^{s}\overline{\mu}_{i}%
^{2}+\sum_{i=s+1}^{2s-1}\overline{\mu}_{i}^{2}+\sum_{i=n-s+2}^{n}\mu_{i}^{2}.
\label{i2}%
\end{equation}
Further, the condition $n\geq3s-2,$ implies that $n-s+2>2s-1$ and so
\[
\left\{  s+1,\ldots,2s-1\right\}  \cap\left\{  n-s+2,\ldots,n\right\}
=\varnothing.
\]
Therefore, adding (\ref{i1}) and (\ref{i2}) together with $\mu^{2}$ and
$\overline{\mu}^{2},$ we see that
\[
\mu^{2}+2\sum_{i=2}^{s}\mu_{i}^{2}+\overline{\mu}^{2}+2\sum_{i=2}^{s}%
\overline{\mu}_{i}^{2}\leq\sum_{i=1}^{n}\mu_{i}^{2}+\sum_{i=1}^{n}%
\overline{\mu}_{i}^{2}\leq n\left(  n-1\right)  .
\]
Finally, using (\ref{Nosin}) we find that $\mu^{2}+\overline{\mu}^{2}%
\geq\left(  \mu+\overline{\mu}\right)  ^{2}/2\geq(n-1)^{2}/2,$ and so%
\[
\sum_{i=2}^{s}\mu_{i}^{2}+\sum_{i=2}^{s}\overline{\mu}_{i}^{2}\leq\frac{1}%
{2}n\left(  n-1\right)  -\frac{1}{4}(n-1)^{2}=\frac{n^{2}-1}{4}<\frac{n^{2}%
}{4},
\]
completing the proof of Theorem \ref{thl}.
\end{proof}

\bigskip

\begin{proof}
[\textbf{Proof of Theorem \ref{thl1}}]If $\mu_{s}\left(  G\right)  \geq0$ and
$\mu_{s}(\overline{G})\geq0,$ then
\[
\sum_{i=2}^{s}\left(  \mu_{i}^{2}+\overline{\mu}_{i}^{2}\right)  \geq\left(
s-1\right)  \left(  \mu_{s}^{2}+\overline{\mu}_{s}^{2}\right)  ,
\]
and inequality (\ref{in4}) follows by Theorem \ref{thl}.

Next, if $\mu_{s}<0$ and $\overline{\mu}_{s}<0,$ then Lemma \ref{a2} implies
that
\[
\mu_{s}^{2}+\overline{\mu}_{s}^{2}\leq\frac{n^{2}}{2\left(  n-s+1\right)
}<\frac{n^{2}}{4\left(  s-1\right)  },
\]
so (\ref{in4}) follows in this case as well.

Finally, assume that $\mu_{s}<0$ and $\overline{\mu}_{s}\geq0.$ Then
$\mu_{2s-1}\leq\cdots\leq\mu_{s+1}\leq\mu_{s}<0$ and so,
\begin{equation}
\left(  s-1\right)  \mu_{s}^{2}\leq\mu_{s+1}^{2}+\cdots+\mu_{2s-1}^{2}.
\label{i3}%
\end{equation}
Since $\overline{\mu}_{2}\geq\cdots\geq\overline{\mu}_{s}\geq0,$ inequality
(\ref{wi1}) implies that $\overline{\mu}_{n-i+2}\leq0,$ and so%
\[
\overline{\mu}_{s}^{2}\leq\overline{\mu}_{i}^{2}<\left(  \overline{\mu}%
_{i}+1\right)  ^{2}\leq\mu_{n-i+2}^{2}%
\]
for every $i=2,\ldots,s.$ Therefore
\begin{equation}
\left(  s-1\right)  \overline{\mu}_{s}^{2}\leq\mu_{n}^{2}+\cdots+\mu
_{n-s+2}^{2}. \label{i4}%
\end{equation}
Since the condition $n\geq3s-2,$ implies that $n-s+2>2s-1,$ we see that
\[
\left\{  s+1,\ldots,2s-1\right\}  \cap\left\{  n-s+2,\ldots,n\right\}
=\varnothing.
\]
Hence, setting $X:=\left\{  s+1,\ldots,2s-1\right\}  \cup\left\{
n-s+2,\ldots,n\right\}  ,$ inequalities (\ref{i3}), (\ref{i4}) and Lemma
\ref{a1} imply that
\[
\left(  s-1\right)  \left(  \mu_{s}^{2}+\overline{\mu}_{s}^{2}\right)
\leq\sum_{i=s+1}^{2s-1}\mu_{i}^{2}+\sum_{i=n-s+2}^{n}\mu_{i}^{2}=\sum_{i\in
X}\mu_{i}^{2}\leq\frac{n^{2}}{4},
\]
completing the proof of Theorem \ref{thl1}.
\end{proof}

\bigskip

\begin{proof}
[\textbf{Proof of Theorem \ref{cor2}}]Assume that $s\geq2,$ $n\geq15\left(
s-1\right)  $, and $G$ is a graph of order $n.$ As mentioned above, using the
AM - QM inequality and Theorem \ref{thl1}, we always have
\begin{equation}
\left\vert \mu_{s}\left(  G\right)  \right\vert +|\mu_{s}(\overline{G})|\text{
}\leq\text{ }\frac{n}{\sqrt{2\left(  s-1\right)  }}. \label{ub}%
\end{equation}
To the end of the proof we shall show that we can add a $-1$ to the right side
of this inequality. Thus, let $G$ be a graph of order $n$ with
\[
\left\vert \mu_{s}\left(  G\right)  \right\vert +|\mu_{s}(\overline{G})|\text{
}=\text{ }f_{s}\left(  n\right)  ,
\]
and assume for a contradiction that
\begin{equation}
f_{s}\left(  n\right)  >\text{ }\frac{n}{\sqrt{2\left(  s-1\right)  }}-1,
\label{as0}%
\end{equation}
Our first aim is to show that $\mu_{s}>0$ and $\overline{\mu}_{s}>0.$ Indeed,
if both $\mu_{s}$ and $\overline{\mu}_{s}$ are non-positive, then Lemma
\ref{a2} implies that
\[
\left\vert \mu_{s}\left(  G\right)  \right\vert +|\mu_{s}(\overline{G})|\text{
}<\frac{n}{\sqrt{n-s+1}}\leq\frac{n}{\sqrt{2\left(  s-1\right)  }}-1.
\]
Now, let $\mu_{s}>0$ and $\overline{\mu}_{s}\leq0$. Then, Lemmas \ref{a1} and
\ref{a2} imply that
\[
\left\vert \mu_{s}\right\vert \leq\frac{n}{2\sqrt{s-1}}\text{ \ \ and
\ \ }|\overline{\mu}_{s}|\text{ }\leq\frac{n}{2\sqrt{n-s+1}},
\]
and so, in view of $n\geq15\left(  s-1\right)  ,$ we see that
\[
\frac{n}{2\sqrt{s-1}}+\frac{n}{2\sqrt{n-s+1}}<\frac{n}{\sqrt{2\left(
s-1\right)  }}-1,
\]
contradicting (\ref{as0}).\ Therefore $\mu_{s}>0$ and $\overline{\mu}_{s}>0.$

Now, let $t$ be a positive integer and set $H:=G^{\left(  t\right)  }.$ Since
$\mu_{s}>0$ and $\overline{\mu}_{s}>0,$ Propositions \ref{pro1} and \ref{pro2}
imply that
\[
\mu_{s}\left(  H\right)  =t\mu_{s}\text{ \ \ and \ \ }\mu_{s}\left(
\overline{H}\right)  =\mu_{s}(\overline{G}^{\left[  t\right]  })=t\overline
{\mu}_{s}+t-1,
\]
and therefore
\[
\left\vert \mu_{s}\left(  H\right)  \right\vert +|\mu_{s}\left(  \overline
{H}\right)  |\text{ }=tf_{s}\left(  n\right)  +t-1.
\]
On the other hand, (\ref{ub}) implies that
\[
\left\vert \mu_{s}\left(  H\right)  \right\vert +|\mu_{s}\left(  \overline
{H}\right)  |\text{ }\leq\frac{tn}{\sqrt{2\left(  s-1\right)  }};
\]
hence%
\[
f_{s}\left(  n\right)  \leq\frac{n}{\sqrt{2\left(  s-1\right)  }}-\frac
{t-1}{t}.
\]
Now, letting $t$ tend to $\infty,$ we obtain a contradiction to (\ref{as0}),
and thus complete the proof of Theorem \ref{cor2}.
\end{proof}

\bigskip

\begin{proof}
[\textbf{Proof of Theorem \ref{ths}}]We start with the obvious fact
\begin{equation}%
{\displaystyle\sum\limits_{i=2}^{n}}
\left(  \mu_{i}+\overline{\mu}_{i}\right)  =-\mu-\overline{\mu}. \label{eq1}%
\end{equation}
For $i=2,\ldots,n,$ set $w_{i}:=\mu_{i}+\overline{\mu}_{n-i+2}.$ Rearranging
(\ref{eq1}) and using (\ref{wi1}), we find that
\[%
{\displaystyle\sum\limits_{i=2}^{s+1}}
(w_{i}+w_{n-i+2})=-\mu-\overline{\mu}-%
{\displaystyle\sum\limits_{i=s+2}^{n-s}}
w_{i}\geq-\mu-\overline{\mu}+n-2s-1.
\]

On the other hand,
\[
\mu_{i}^{2}=\left(  \overline{\mu}_{n-i+2}-w_{i}\right)  ^{2}=\overline{\mu
}_{n-i+2}^{2}-2w_{i}\overline{\mu}_{n-i+2}+w_{i}^{2}>\overline{\mu}%
_{n-i+2}^{2}-2w_{i}\overline{\mu}_{n-i+2}.
\]
Since $w_{i}<0$ and $\overline{\mu}_{n-i+2}\geq-n/2$ (by Lemma \ref{a2})$,$ we
see that%
\[
\mu_{i}^{2}\geq\overline{\mu}_{n-i+2}^{2}+nw_{i}.
\]
Therefore,%
\begin{align*}%
{\displaystyle\sum\limits_{i=2}^{s+1}}
\left(  \mu_{i}^{2}+\overline{\mu}_{i}^{2}\right)   &  \geq%
{\displaystyle\sum\limits_{i=2}^{s+1}}
\left(  \mu_{n-i+2}^{2}+\overline{\mu}_{n-i+2}^{2}\right)  +n%
{\displaystyle\sum\limits_{i=2}^{s+1}}
\left(  w_{i}+w_{n-i+2}\right) \\
&  =%
{\displaystyle\sum\limits_{i=1}^{s}}
\left(  \mu_{n-i+1}^{2}+\overline{\mu}_{n-i+1}^{2}\right)  +n%
{\displaystyle\sum\limits_{i=2}^{s+1}}
\left(  w_{i}+w_{n-i+2}\right) \\
&  \geq%
{\displaystyle\sum\limits_{i=1}^{s}}
\left(  \mu_{n-i+1}^{2}+\overline{\mu}_{n-i+1}^{2}\right)  +n\left(
n-2s-1-\mu-\overline{\mu}\right)  .
\end{align*}

Using that $n>2s,$ we see that
\[
\mu^{2}+\overline{\mu}^{2}+%
{\displaystyle\sum\limits_{i=2}^{s+1}}
(\mu_{i}^{2}+\overline{\mu}_{i}^{2})+%
{\displaystyle\sum\limits_{i=1}^{s}}
(\mu_{n-i+1}^{2}+\overline{\mu}_{n-i+1}^{2})\leq n\left(  n-1\right)  ,
\]
and so,
\[
\mu^{2}+\overline{\mu}^{2}+2%
{\displaystyle\sum\limits_{i=1}^{s}}
(\mu_{n-i+1}^{2}+\overline{\mu}_{n-i+1}^{2})+n\left(  n-2s-1-\mu-\overline
{\mu}\right)  \leq n\left(  n-1\right)  .
\]
Rearranging this inequality, we find that
\begin{align*}
2%
{\displaystyle\sum\limits_{i=1}^{s}}
(\mu_{n-i+1}^{2}+\overline{\mu}_{n-i+1}^{2})  &  \leq2sn+n\left(
\mu+\overline{\mu}\right)  -\mu^{2}-\overline{\mu}^{2}\\
&  \leq2sn+n\left(  \mu+\overline{\mu}\right)  -\left(  \mu+\overline{\mu
}\right)  ^{2}/2\\
&  \leq2sn+n^{2}/2,
\end{align*}
completing the proof of Theorem \ref{ths}.
\end{proof}

\bigskip

\begin{proof}
[\textbf{Proof of Corollary \ref{cor4}}]Since Theorem \ref{tR} implies that
$\mu_{n-s+1}\leq0$ and $\overline{\mu}_{n-s+1}\leq0,$ we get%
\[%
{\displaystyle\sum\limits_{i=1}^{s}}
\left(  \mu_{n-i+1}^{2}+\overline{\mu}_{n-i+1}^{2}\right)  \geq s\left(
\mu_{n-s+1}^{2}+\overline{\mu}_{n-s+1}^{2}\right)  ,
\]
and the assertion follows by Theorem \ref{ths}.
\end{proof}

\bigskip

\begin{proof}
[\textbf{Proof of Theorem \ref{ths1}}]Assume that $s\geq1,$ $n\geq4^{s}$, and
$G$ is a graph of order $n$ with
\[
\left\vert \mu_{n-s+1}\left(  G\right)  \right\vert +|\mu_{n-s+1}(\overline
{G})|\text{ }=\text{ \ }f_{n-s}\left(  n\right)  .
\]
To begin with, using the AM - QM inequality and Corollary \ref{cor4}, we see
that
\[
f_{n-s}\left(  n\right)  \leq\text{ }\frac{n}{\sqrt{2s}}+\sqrt{2s}.
\]
Note that this inequality is almost what we need, as the main term is the
correct one, but the constant term is larger than desired. Thus, to the end of
the proof we shall show that we can make the additive term equal to $1$. We
shall use the same techniques as in the proof of Theorem \ref{cor2}.

Assume for a contradiction that
\begin{equation}
f_{n-s}\left(  n\right)  >\text{ }\frac{n}{\sqrt{2s}}+1, \label{ass}%
\end{equation}
First, Theorem \ref{tR} implies that $\overline{\mu}_{n-s+1}\leq0$, and so
Lemma \ref{a2} implies that $\left\vert \overline{\mu}_{n-s+1}\right\vert \leq
n/\left(  2\sqrt{s}\right)  .$ Hence,
\[
\left\vert \mu_{n-s+1}\right\vert >\frac{n}{\sqrt{2s}}+1-\left\vert
\overline{\mu}_{n-s+1}\right\vert >\frac{n}{\sqrt{2s}}+1-\frac{n}{2\sqrt{s}%
}>1,
\]
and, by symmetry, $\left\vert \overline{\mu}_{n-s+1}\right\vert >1.$ That is
to say,
\[
\mu_{n-s+1}<-1\text{ \ \ and \ }\overline{\mu}_{n-s+1}<-1.
\]

Let $t$ be a positive integer and set $H:=G^{\left(  t\right)  }.$ Since
$\mu_{n-s+1}<-1$ and $\overline{\mu}_{n-s+1}<-1,$ Propositions \ref{pro1} and
\ref{pro2} imply that
\[
\mu_{tn-s+1}\left(  H\right)  =t\mu_{n-s+1}\text{ \ \ and \ \ }\mu
_{tn-s+1}\left(  \overline{H}\right)  =\mu_{tn-s+1}(\overline{G}^{\left[
t\right]  })=t\overline{\mu}_{n-s+1}+t-1,
\]
and therefore
\[
\left\vert \mu_{tn-s+1}\left(  H\right)  \right\vert +|\mu_{tn-s+1}\left(
\overline{H}\right)  |\text{ }=tf_{n-s}\left(  n\right)  -t+1.
\]
On the other hand, Corollary \ref{cor4} implies that
\[
\left\vert \mu_{tn-s+1}\left(  H\right)  \right\vert +|\mu_{tn-s+1}%
(\overline{H})|\text{ }\leq\frac{tn}{\sqrt{2s}}+\sqrt{2s};
\]
hence%
\[
f_{n-s}\left(  n\right)  \leq\frac{n}{\sqrt{2s}}+\frac{\sqrt{2s}}{t}%
+\frac{t-1}{t}.
\]
Now, letting $t$ tend to $\infty,$ we obtain a contradiction to (\ref{ass}),
and thus complete the proof of Theorem \ref{ths1}.
\end{proof}

\section{Lower bounds on $f_{s}\left(  n\right)  $ and $f_{n-s}\left(
n\right)  $}

Define an infinite sequence of square $\left(  0,1\right)  $ matrices
$A_{1},A_{2},\ldots$ as follows. Let
\[
A_{1}=\left[
\begin{array}
[c]{cc}%
1 & 0\\
0 & 1
\end{array}
\right]  ,\text{ \ }B=\left[
\begin{array}
[c]{cc}%
1 & -1\\
-1 & -1
\end{array}
\right]  ,
\]
and for every $k=1,2,\ldots$ set%
\[
A_{k+1}=\frac{1}{2}\left(  (2A_{k}-J_{2^{k}})\otimes B+J_{2^{k+1}}\right)  .
\]
First note that $A_{k+1}$ is a $(0,1)$ symmetric matrix of order $2^{k+1}.$

To give some properties of the eigenvalues of the matrices $A_{k+1}$ we first
point out a fact without a proof.

\begin{lemma}
\label{le3}Let $M$ be a symmetric real matrix of order $n$ with all row-sums
equal to $r,$ and $r,\mu_{2}(M),\ldots,\mu_{n}(M)$ be the eigenvalues of $M.$
If $a$ and $b$ are real numbers, then the eigenvalues of the matrix
$aM+bJ_{n}$ are
\[
ar+bn,a\mu_{2}(M),\ldots,a\mu_{n}(M).
\]

\end{lemma}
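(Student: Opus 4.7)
The plan is to exhibit an orthonormal basis of common eigenvectors for $M$ and $J_n$, and then simply read off the spectrum of $aM+bJ_n$. The key observation is that the all-ones vector $\mathbf{1}$ plays a distinguished role for both matrices simultaneously, which will make them simultaneously diagonalizable in a transparent way.

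First I would note that the row-sum hypothesis gives $M\mathbf{1}=r\mathbf{1}$, so $\mathbf{1}$ is an eigenvector of $M$ with eigenvalue $r$. Because $M$ is symmetric, the spectral theorem supplies an orthonormal basis $\{\mathbf{1}/\sqrt{n},\,v_2,\ldots,v_n\}$ of $\mathbb{R}^n$ consisting of eigenvectors of $M$, with $Mv_i=\mu_i(M)\,v_i$ and $v_i\perp\mathbf{1}$ for $i=2,\ldots,n$. Next, writing $J_n=\mathbf{1}\mathbf{1}^{T}$, I immediately get $J_n\mathbf{1}=n\mathbf{1}$ and $J_n v_i=0$ for every $i\geq 2$. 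Combining the two actions yields
\[
(aM+bJ_n)\mathbf{1}=(ar+bn)\mathbf{1},\qquad (aM+bJ_n)v_i=a\mu_i(M)\,v_i \quad (i=2,\ldots,n),
\]
so the same orthonormal basis diagonalizes $aM+bJ_n$ and produces exactly the list of eigenvalues claimed in the statement.

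There is essentially no obstacle here; the lemma is a clean consequence of the fact that $M$ and $J_n$ share $\mathbf{1}$ as an eigenvector, together with the rank-one structure of $J_n$. The only mild subtlety worth flagging is that symmetry of $M$ is what forces the orthogonal complement $\mathbf{1}^{\perp}$ to be $M$-invariant, which in turn is what makes $J_n$ annihilate the remaining eigenvectors of $M$ and lets the two matrices be diagonalized in the same basis.
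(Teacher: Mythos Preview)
Your proof is correct and is the standard argument: the paper itself states this lemma explicitly ``without a proof,'' so there is nothing to compare your approach against.
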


In the following lemma and its proof we shall use $a^{\left[  b\right]  }$ to
indicate an eigenvalue $a$ of multiplicity $b.$

\begin{lemma}
\label{le4}If $k\geq1,$ then the row-sums of $A_{k+1}$ are equal to $2^{k}$
and its spectrum is
\[
2^{k},\left(  2^{k/2}\right)  ^{\left[  2^{k-1}\right]  },0^{\left[
2^{k}-1\right]  },\left(  -2^{k/2}\right)  ^{\left[  2^{k-1}\right]  }.
\]

\end{lemma}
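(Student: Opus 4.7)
The plan is to prove both assertions by induction on $k \geq 1$. The base case $k=1$ reduces to $A_2$, which the recursion computes to a $4 \times 4$ $(0,1)$-matrix with constant row-sum $2$ and characteristic polynomial $(\lambda-2)\lambda(\lambda^2-2)$; this gives the required spectrum $2, \sqrt{2}, 0, -\sqrt{2}$.

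For the induction step, I assume the claim for $A_{k+1}$ and derive it for $A_{k+2}$. The row-sum part is immediate: row-sums multiply under Kronecker product, so by the inductive hypothesis $M := 2A_{k+1} - J_{2^{k+1}}$ has row-sum $0$, hence so does $M \otimes B$, and the recursion yields row-sum $\tfrac{1}{2} \cdot 2^{k+2} = 2^{k+1}$ for $A_{k+2}$.

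The spectrum of $A_{k+2}$ follows in three stages using the Kronecker product formula and Lemma \ref{le3}. First, the all-ones vector $\mathbf{1}$ is a common eigenvector of $A_{k+1}$ and $J_{2^{k+1}}$ with eigenvalues $2^k$ and $2^{k+1}$ respectively, so $M \mathbf{1} = 0$; on $\mathbf{1}^\perp$ the matrix $J_{2^{k+1}}$ acts as $0$, so $M$ coincides with $2A_{k+1}$ there. Consequently the spectrum of $M$ is obtained from that of $A_{k+1}$ by replacing the top eigenvalue $2^k$ with $0$ and doubling every other eigenvalue. Second, the characteristic polynomial of $B$ is $\lambda^2 - 2$, so its eigenvalues are $\pm\sqrt{2}$, and the eigenvalues of $M \otimes B$ are the multiset of all products. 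Third, $M \otimes B$ still has row-sum $0$, so Lemma \ref{le3} applied with $a = b = 1/2$ and $n = 2^{k+2}$ yields the spectrum of $A_{k+2}$: the $\mathbf{1}$-eigenvalue $0$ is replaced by $\tfrac{1}{2}\cdot 2^{k+2} = 2^{k+1}$, and every other eigenvalue is halved.

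The only obstacle is bookkeeping --- tracking which eigenvalue sits on $\mathbf{1}$ at each stage so that Lemma \ref{le3} correctly installs the top eigenvalue $2^{k+1}$ at the end. Once that is in place the multiplicities collapse cleanly: the symmetric pair $\pm\sqrt{2}$ in the spectrum of $B$ duplicates the nonzero eigenvalues $\pm 2^{(k+2)/2}$ of $M$ into a balanced spectrum $\pm 2^{(k+3)/2}$ around $0$, and halving produces the claimed values $\pm 2^{(k+1)/2}$ with multiplicities $2^k$ each, while the remaining $2^{k+1} - 1$ zeros account for the zeros of $M$ tensored with $B$ minus the single absorbed $\mathbf{1}$-eigenvalue.
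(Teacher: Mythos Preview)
Your proof is correct and follows essentially the same route as the paper's: both argue by induction with base case $A_2$, use Lemma~\ref{le3} (or its direct eigenvector justification) to pass from $A_{k+1}$ to $2A_{k+1}-J$, tensor with $B$ via the Kronecker spectrum formula, and then apply Lemma~\ref{le3} once more to absorb the added $J$ and install the top eigenvalue. The only cosmetic difference is that you index the inductive step from $A_{k+1}$ to $A_{k+2}$ rather than from $A_k$ to $A_{k+1}$, and you spell out the $\mathbf{1}$/$\mathbf{1}^\perp$ decomposition explicitly where the paper simply cites Lemma~\ref{le3}.
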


\begin{proof}
We shall prove the lemma by induction on $k$. If $k=1,$ we see that
\[
A_{2}=\frac{1}{2}\left(  (2A_{1}-J_{2})\otimes B+J_{4}\right)  =\left[
\begin{array}
[c]{cccc}%
1 & 0 & 0 & 1\\
0 & 0 & 1 & 1\\
0 & 1 & 1 & 0\\
1 & 1 & 0 & 0
\end{array}
\right]  .
\]
The row-sums of $A_{2}$ are equal to $2$, and its eigenvalues are $2,$
$\sqrt{2},0$ and $-\sqrt{2}.$ Assume that the statement holds for $A_{k};$ in
particular, the row-sums of $A_{k}$ are equal to $2^{k-1}.$ Hence, the
row-sums of both $2A_{k}-J_{2^{k}}\ $and $(2A_{k}-J_{2^{k}})\otimes B$ are
zero, and so the row-sums of $A_{k+1}$ are $2^{k},$ proving the first part of
the statement.

Further, the spectrum of $A_{k}$ is
\[
2^{k-1},\text{ }\left(  2^{\left(  k-1\right)  /2}\right)  ^{\left[
2^{k-2}\right]  },\text{ }0^{\left[  2^{k-1}-1\right]  },\text{ }\left(
-2^{\left(  k-1\right)  /2}\right)  ^{\left[  2^{k-2}\right]  }.
\]
Thus, by Lemma \ref{le3} the spectrum of $2A_{k}-J_{2^{k}}$ is%
\[
\left(  2^{\left(  k+1\right)  /2}\right)  ^{\left[  2^{k-2}\right]  },\text{
}0^{\left[  2^{k-1}\right]  },\text{ }\left(  -2^{\left(  k+1\right)
/2}\right)  ^{\left[  2^{k-2}\right]  }.
\]
Since the eigenvalues of $B$ are $\sqrt{2}$ and $-\sqrt{2},$ the spectrum of
$(2A_{k}-J_{2^{k}})\otimes B$ is%
\[
\left(  2^{\left(  k+2\right)  /2}\right)  ^{\left[  2^{k-1}\right]  },\text{
}0^{\left[  2^{k}\right]  },\text{ }\left(  -2^{\left(  k+2\right)
/2}\right)  ^{\left[  2^{k-1}\right]  }.
\]
Finally, the row-sums of $(2A_{k}-J_{2^{k}})\otimes B$ are zero, so by Lemma
\ref{le3} the spectrum of $A_{k+1}$ is
\[
2^{k},\text{ }\left(  2^{k/2}\right)  ^{\left[  2^{k-1}\right]  },\text{
}0^{\left[  2^{k}-1\right]  },\text{ }\left(  -2^{k/2}\right)  ^{\left[
2^{k-1}\right]  },
\]
completing the induction step and the proof of Lemma \ref{le4}.
\end{proof}

If $P$ is a Hermitian matrix of size $n,$ we index the eigenvalues of $P$ as
$\mu_{1}\left(  P\right)  \geq\cdots\geq\mu_{n}\left(  P\right)  .$ Observe
the following special case of Weyl's inequalities that we shall need in the
proof of Theorem \ref{thlb}.

If $P$ and $Q$ are Hermitian matrices of size $n,$ then for each $1\leq s\leq
n$
\[
\mu_{n}(P-Q)\leq\mu_{s}(P)-\mu_{s}(Q)\leq\mu_{1}(P-Q).
\]

\begin{proof}
[\textbf{Proof of Theorem \ref{thlb}}]Let $A_{k+1}(t)$ be matrix obtained from
$A_{k+1}\otimes J_{t}$ by zeroing all diagonal entries. Clearly $A_{k+1}(t)$
is the adjacency matrix of a graph $G$ of order $n=2^{k+1}t.$ For
$s=2^{k-1}+1,$ and for each $i\leq s$ we have
\[
\mu_{i}\left(  G\right)  \geq\mu_{s}\left(  G\right)  \geq\mu_{s}%
(A_{k+1}\otimes J_{t})-\mu_{1}\left(  A_{k+1}\otimes J_{t}-A_{k+1}(t)\right)
=2^{k/2}t-1=\frac{v\left(  G\right)  }{2\sqrt{2\left(  s-1\right)  }}-1.
\]
Next, it is not hard to see that the adjacency matrix of $\overline{G}$ is
obtained from $\left(  J_{2^{k+1}}-A_{k+1}\right)  \otimes J_{t}$ by zeroing
all diagonal entries. Since $A_{k+1}$ and $J_{2^{k+1}}-A_{k+1}$ have the same
spectrum, we also find that
\[
\mu_{i}\left(  \overline{G}\right)  \geq\mu_{s}\left(  \overline{G}\right)
\geq\mu_{s}(\left(  J_{2^{k+1}}-A_{k+1}\right)  \otimes J_{t})-1=2^{k/2}%
t-1=\frac{v\left(  G\right)  }{2\sqrt{2\left(  s-1\right)  }}-1.
\]
Finally, we have
\[
\mu_{n-i+2}\left(  G\right)  \leq\mu_{n-s+2}\left(  G\right)  \leq\mu
_{n-s+2}(A_{k+1}\otimes J_{t})-\mu_{n}\left(  A_{k+1}\otimes J_{t}%
-A_{k+1}(t)\right)  =-2^{k/2}t=-\frac{v\left(  G\right)  }{2\sqrt{2\left(
s-1\right)  }},
\]
and the same bound holds also for $\mu_{n-i+2}\left(  \overline{G}\right)  .$
The proof of Theorem \ref{thlb} is completed.
\end{proof}

\section{Concluding remarks}

We would like to emphasize the decisive role of Weyl's inequalities in our
proofs. It turns out that they offer almost unlimited possibilities for
variations. The upper bounds on $f_{s}\left(  n\right)  $ determined in
Corollary \ref{cor2} and Theorem \ref{ths} seem asymptotically tight for every
$s$ and $n$ tending to infinity. However, if would be hard to disprove such
conjecture if it turns out to be false. Thus, we raise the following problem.

\begin{problem}
For which values of $s$ it is true that%
\begin{equation}
\lim_{n\rightarrow\infty}\frac{1}{n}f_{s}\left(  n\right)  =\frac{1}%
{\sqrt{2\left(  s-1\right)  }}? \label{l1}%
\end{equation}
For which values of $s$ it is true that
\begin{equation}
\lim_{n\rightarrow\infty}\frac{1}{n}f_{n-s}\left(  n\right)  =\frac{1}%
{\sqrt{2s}}? \label{l2}%
\end{equation}

\end{problem}

Further, for those $s$ for which the answer to of the above problem is
positive, we can ask subtler and more definite questions.

\begin{problem}
If $s$ is such that equality holds in (\ref{l1}), is it true that
\[
\lim_{n\rightarrow\infty}\left(  f_{s}\left(  n\right)  \text{ }-\frac
{n}{\sqrt{2\left(  s-1\right)  }}\right)  =-1?
\]
If $s$ is such that equality holds in (\ref{l2}), is it true that
\[
\lim_{n\rightarrow\infty}\left(  \frac{1}{n}f_{n-s}\left(  n\right)  -\frac
{1}{\sqrt{2s}}\right)  =1?
\]

\end{problem}

\bigskip

\textbf{Acknowledgements.} Part of this work was done while the second author
was visiting the University of Memphis in 2012/2013. She is grateful for the
hospitality of the University during her stay.


\begin{thebibliography}{99}                                                                                               %


\bibitem {AmHa72}A.T. Amin and S.L. Hakimi, Upper bounds on the order of a
clique of a graph, \emph{SIAM J. Appl. Math.}\textbf{22} (1972), 569-573.

\bibitem {AoHa13}M. Aouchiche and P. Hansen, A survey of Nordhaus-Gaddum type
relations, \emph{Discrete Appl. Math. }\textbf{161} (2013), 466--546.

\bibitem {Bol98}B. Bollob\'{a}s, \emph{Modern Graph Theory}\textit{,} Graduate
Texts in Mathematics, 184, Springer-Verlag, New York (1998), xiv+394 pp.

\bibitem {Csi09}P. Csikv\'{a}ri, On a conjecture of V. Nikiforov, \emph{Disc.
Math.} \textbf{309} (2009), 4522-4526.

\bibitem {HoJo88}R. Horn and C. Johnson, \emph{Matrix Analysis,} Cambridge
University Press, Cambridge, 1985. xiii+561 pp.

\bibitem {Nik06}V. Nikiforov, Linear combinations of graph eigenvalues,
\emph{Electron. J. Linear Algebra }\textbf{15 }(2006), 329-336.

\bibitem {Nik06a}V. Nikiforov, Eigenvalues and degree deviation in graphs,
\emph{Linear Algebra Appl.} \textbf{414} (2006), 347-360

\bibitem {Nik07}V. Nikiforov, Eigenvalue problems of Nordhaus-Gaddum type,
\emph{Discrete Math. }\textbf{307} (2007), 774--780.

\bibitem {NoGa56}E.A. Nordhaus and J. Gaddum, On complementary graphs,
\emph{Amer. Math. Monthly} \textbf{63} (1956), 175--177.

\bibitem {Nos70}E. Nosal, Eigenvalues of Graphs, Master's thesis, University
of Calgary, 1970.

\bibitem {Ter11}T. Terpai, Proof of a conjecture of V. Nikiforov,
\emph{Combinatorica, }\textbf{31 }(2011), 739-754.
\end{thebibliography}
\end{document}